\documentclass{amsart}

\usepackage{amsmath}
\usepackage{amssymb}
\usepackage[]{amsrefs}
\usepackage{xpatch}
\usepackage{kantlipsum} 
\setlength{\textwidth}{\paperwidth}
\addtolength{\textwidth}{-2in}
\calclayout

\xpatchcmd{\proof}{\itshape}{\prooflabelfont}{}{}
\newcommand{\prooflabelfont}{\bfseries}

\DefineSimpleKey{bib}{primaryclass}{}
\DefineSimpleKey{bib}{archiveprefix}{}

\BibSpec{arXiv}{%
  +{}{\PrintAuthors}{author}
  +{,}{ \textit}{title}
  +{}{ \parenthesize}{date}
  +{,}{ arXiv }{eprint}
  +{,}{ primary class }{primaryclass}
}

\usepackage{kantlipsum} 
\setlength{\textwidth}{\paperwidth}
\addtolength{\textwidth}{-2in}
\calclayout
\usepackage{extarrows}
\usepackage{amssymb}
\usepackage[utf8]{inputenc}
\newtheorem{theorem}{Theorem}[section]
\newtheorem{proposition}[theorem]{Proposition}
\newtheorem{lemma}[theorem]{Lemma}
\newtheorem{corollary}[theorem]{Corollary}
\theoremstyle{definition}

\theoremstyle{definition}
\newtheorem{definition}[theorem]{Definition}
\newtheorem{remark}[theorem]{Remark}

\numberwithin{equation}{section}
\usepackage{amsthm,amsmath,amssymb}
\usepackage[all,cmtip]{xy}
\usepackage{amsmath}
\usepackage{amssymb}
\usepackage{amsthm}
\usepackage[]{amsrefs}
\usepackage{hyperref}
\usepackage{xpatch}
\usepackage{amsfonts}
\usepackage{amssymb}
\usepackage[utf8]{inputenc}
\usepackage{amsthm}
\usepackage{stmaryrd}
\usepackage{csquotes}
\usepackage{extarrows}
\MakeOuterQuote{"}
\theoremstyle{definition}

\usepackage{enumitem}
\usepackage{tikz-cd}

\usepackage{blindtext}

\DeclareMathOperator{\Dim}{dim}
\DeclareMathOperator{\Ht}{height}
\DeclareMathOperator{\Depth}{depth}
\DeclareMathOperator{\Ext}{Ext}

\DeclareMathOperator{\CM}{CM}
\DeclareMathOperator{\MCM}{MCM}
\DeclareMathOperator{\SC}{SC}
\DeclareMathOperator{\Ann}{Ann}
\DeclareMathOperator{\Ass}{Ass}
\DeclareMathOperator{\Pic}{Pic}

\DeclareMathOperator{\Hom}{Hom}

\begin{document}

\title[The index of a numerical semigroup ring]{The index of a numerical semigroup ring}

\author[Richard F. Bartels]{Richard F. Bartels}

\address{Department of Mathematics, Trinity College, 300 Summit St, Hartford, CT, 06106}

\email{rbartels@trincoll.edu}

\urladdr{https://sites.google.com/view/richard-bartels-math/home}

\subjclass[2020]{13B30, 13C13, 13C14, 13C15, 13D02, 13D07, 13E05, 13E15, 13H05, 13H10.}

\keywords{Cohen-Macaulay, generically Gorenstein, canonical ideal, MCM approximation, FID hull}

\title{Cohen-Macaulay approximations and the $\SC_r$-condition}
\begin{abstract} We study the relation between MCM approximations and FID hulls of modules over a Cohen-Macaulay local ring $R$ with canonical module, specifically when $R$ is generically Gorenstein. We then generalize a result of Kato, who proved that a Gorenstein complete local ring $R$ satisfies the $\SC_{2}$-condition if and only if $R$ is a UFD. For $r \geq 3$, we prove a criterion for when an MCM $R$-module $M$ satisfies the $\SC_{r}$-condition, assuming that its first syzygy $\Omega_{R}^{1}(M)$ satisfies the $\SC_{r-1}$-condition.    
\end{abstract}
\maketitle
\large{
\begin{center}
\section{Introduction}
\end{center}
Let $(R,\mathfrak{m})$ be a Cohen-Macaulay local ring with canonical module $\omega$. The {\it{minimal MCM approximation}} of a finitely-generated $R$-module $M$ is an exact sequence of $R$-modules  
\[
0 \longrightarrow Y_M \xlongrightarrow{\iota} X_M \longrightarrow M \longrightarrow 0
\] where $Y_M$ has finite injective dimension, $X_M$ is MCM, and $Y_M$ and $X_M$ have no direct summand in common via $\iota$. The {\it{minimal FID hull}} of $M$ is an exact sequence of $R$-modules
\[
0 \longrightarrow M \longrightarrow Y^M \xlongrightarrow{\pi} X^M \longrightarrow 0
\] where $Y^M$ has finite injective dimension, $X^M$ is MCM or zero, and $Y^M$ and $X^M$ have no direct summand in common via $\pi$. Each finitely-generated $R$-module has a minimal MCM approximation and a minimal FID hull. These sequences are unique up to isomorphism of exact sequences inducing the identity on $M$ \cite[Definitions 11.8 and 11.10, Proposition 11.13, Theorem 11.17]{LW12}. 

A Cohen-Macaulay local ring $R$ with canonical module $\omega$ is {\it{generically Gorenstein}} if $R_{\mathfrak{p}}$ is a Gorenstein local ring for each minimal prime ideal $\mathfrak{p}$ of $R$. When $R$ is not Gorenstein, this condition is equivalent to $\omega$ being isomorphic to a height one ideal of $R$ (see \cite[Proposition 3.3.18]{BH93} and \cite[Proposition 11.6]{LW12}). Such an ideal is called a {\it canonical ideal} and also denoted $\omega$. In the following, we prove isomorphisms relating MCM approximations and FID hulls of modules over a generically Gorenstein ring that are obtained from ideals containing $\omega$ and contained in $\omega$ (Propositions \ref{prop:2.6} and \ref{prop:2.7}). In Proposition \ref{prop:2.8}, we use these results to prove the following: Let $R$ be a Cohen-Macaulay local ring with canonical module that is generically Gorenstein. Let $\omega \subseteq R$ be a canonical ideal and $x \in \omega$ an $R$-regular element. Let $(\omega/xR)^{\vee}:=\text{Ext}_{R}^{1}(\omega/xR,\omega)$. Then $\omega/xR$ and $(\omega/xR)^{\vee}$ are Cohen-Macaulay $R$-modules of codimension 1, and up to adding or deleting direct summands isomorphic to $\omega$, we have the following isomorphisms.
\[
X_{\omega/xR} \cong X_{(\omega/xR)^{\vee}} \cong X^{R/\omega}.
\] We have the following exact sequence characterizing $X^{R/\omega}$, where $n=\mu_{R}(\omega)$.
\[
0 \longrightarrow R \longrightarrow \omega^{n} \longrightarrow X^{R/\omega} \longrightarrow 0.
\] 

In section 3, we study conditions for when an MCM module $C$ is an MCM approximation of a finitely-generated module $M$ of some fixed codimension (i.e., $C \cong X_{M}$). This work is motivated in part by the following uniqueness result for FID hulls: For a Gorenstein complete local ring $R$, Kato proved that if $M$ and $N$ are finitely-generated $R$-modules such that $M$ has positive codimension, $X^M \cong X^N$, and $Y^M \cong Y^N$, then $M \cong N$ \cite[Theorem 1.2]{KK07}.

Since MCM approximations and FID hulls are dual constructions, it is natural to ask if the map $M \mapsto X_M$ from finitely-generated modules of positive codimension to isomorphism classes of MCM modules is surjective. This leads to the following definition: Let $R$ be a $d$-dimensional Cohen-Macaulay local ring with canonical module and let $C$ be an MCM $R$-module. For $0 \leq r \leq d$, we say that $C$ satisfies the {\it{$\SC_r$-condition}} if $C$ is {\it{stably isomorphic}} to the minimal MCM approximation of a finitely-generated $R$-module of codimension $r$. That is, for some free $R$-modules $F$ and $G$ and an $R$-module $M$ of codimension $r$, we have $C \oplus F \cong X_M \oplus G$. In this case, we write $C \overset{st}{\cong} X_M$. If $C$ satisfies the $\SC_{r}$-condition, then we can assume $M$ is a Cohen-Macaulay $R$-module \cite[Proposition 2.2]{KK07}. If each MCM $R$-module satisfies the $\SC_r$-condition, we say $R$ satisfies the $\SC_r$-condition. 

Our study of the $\SC_r$-condition is also motivated by the following result of Kato: If $R$ is a complete Gorenstein local ring that satisfies the $\SC_r$-condition for some positive integer $r$, then the localization $R_{\mathfrak{p}}$ is regular for each prime ideal $\mathfrak{p}$ of $R$ of height less than $r$ \cite[Proposition 2.5]{KK07}.

Since every MCM module is its own minimal MCM approximation, every Cohen-Macaulay local ring with canonical module satisfies the $\SC_{0}$-condition. If an MCM module satisfies the $\SC_{r+1}$-condition for some $r>0$, then it also satisfies the $\SC_{r}$-condition. Therefore, the classes of rings which satisfy the $\SC_{r}$-conditions for $r \geq 0$ are ordered by inclusion \cite[Proposition 2.5]{KK07}. 

Kato proved that a Gorenstein complete local ring $R$ satisfies the $\SC_1$-condition if and only if $R$ is a domain \cite[Theorem 3.3]{KK07}. More generally, Leuschke and Weigand proved that if $R$ is a Cohen-Macaulay local ring with canonical module that is generically Gorenstein, then $R$ satisfies the $\SC_1$-condition if and only if $R$ is a domain \cite[Corollary 11.23]{LW12}. Yoshino and Isogawa proved that the following conditions are equivalent for a normal Gorenstein complete local ring $R$ of dimension 2 \cite[Theorem 2.2]{YI00}:\newline
\begin{enumerate}[label=(\alph*)]
\item $R$ is a UFD. \newline 
\item For any MCM $R$-module, there is an $R$-module $L$ of finite length (hence, a Cohen-Macaulay $R$-module of codimension 2) such that $M \overset{st}{\cong} \Omega_{R}^{2}(L)$.
\newline 
\item $R$ satisfies the $\SC_2$-condition.
\newline
\end{enumerate}
Kato generalized this result, proving that a complete Gorenstein local ring $R$ satisfies the $\SC_2$-condition if and only if $R$ is a UFD \cite[Theorem 2.9]{KK07}.

Now, let $R$ be a Gorenstein complete local ring of dimension $d \geq 3$. For $r>0$, let $\CM^r(R)$ denote the class of all Cohen-Macaulay $R$-modules of codimension $r$, and let $\CM(R)$ denote the class of $\MCM$ $R$-modules. For $3 \leq r \leq d$, we prove the following criterion for when an MCM $R$-module $M$ satisfies the $\SC_r$-condition (Proposition \ref{prop:3.12}): Let $M$ be an MCM $R$-module and suppose $\Omega_{R}^{1}(M)$ satisfies the $\SC_{r-1}$-condition. Let $L \in \CM^{r-1}(R)$ such that $X_{L} \stackrel{st}\cong \Omega_{R}^1(M)$. If there is a regular sequence \,${\bf{x}} \in \Ann_{R}(L)$\, of length $r-2$ such that $R/{\bf{x}}R$ is a UFD, then $M$ satisfies the $\SC_{r}$-condition.

In Corollary \ref{corollary:3.16}, we use this criterion to prove the equivalence of the $\SC_d$- and $\SC_{d-1}$-conditions for Gorenstein complete local rings of dimension $d \geq 3$ that remain UFDs after factoring out certain regular sequences of length $d-2$.
\newline
\section{MCM approximations and FID hulls over generically Gorenstein rings}
Let $(R,\mathfrak{m})$ be a Cohen-Macaulay local ring with canonical module $\omega$. For every finitely-generated $R$-module $M$, there is an exact sequence 
\[
0 \longrightarrow Y \xlongrightarrow{\iota} X \longrightarrow M \longrightarrow 0
\] with $Y$ of finite injective dimension and $X$ an MCM $R$-module, called an MCM approximation of $M$. If $Y$ and $X$ have no direct summand in common via $\iota$, then the MCM approximation is {\it{minimal}}, and denoted as follows.
\[
0 \longrightarrow Y_M \longrightarrow X_M \longrightarrow M \longrightarrow 0
\] Dually, there is an exact sequence of $R$-modules 
\[
0 \longrightarrow M \longrightarrow Y' \xlongrightarrow{\pi} X' \longrightarrow 0
\] with $Y'$ of finite injective dimension and $X'$ either MCM or zero, called an FID hull of $M$. If $Y'$ and $X'$ have no direct summand in common via $\pi$, then the FID hull is minimal, and denoted as follows.
\[
0 \longrightarrow M \longrightarrow Y^M \longrightarrow X^M \longrightarrow 0
\] Each finitely-generated $R$-module has a minimal MCM approximation and a minimal FID hull. These sequences are unique up to isomorphism of exact sequences inducing the identity on $M$ \cite[Proposition 11.13]{LW12}. For an MCM approximation $0 \longrightarrow Y \xlongrightarrow{\iota} X \longrightarrow M \longrightarrow 0$, if $Y$ and $X$ have a direct summand $N$ in common via $\iota$, then $N$ is MCM and of finite injective dimension. Therefore, $N \cong \omega^m$ for some positive integer $m$. For an FID hull $0 \longrightarrow M \longrightarrow Y' \xlongrightarrow{\pi} X' \longrightarrow 0$, if $Y'$ and $X'$ have a direct summand $N'$ in common via $\pi$, then $N' \cong \omega^n$ for some positive integer $n$ \cite[Proposition 11.7]{LW12}. As a result, an arbitrary MCM approximation and an arbitrary FID hull can be written as follows.
\begin{proposition}\cite[Propositions 1.5 and 1.6]{Ding90}
Let $(R,\mathfrak{m})$ be a Cohen-Macaulay local ring with canonical module $\omega$. Let $M$ be a finitely-generated $R$-module. An arbitrary MCM approximation of $M$ can be written as follows for some non-negative integer $m$.
\[
0 \longrightarrow \omega^{m} \oplus Y_{M} \longrightarrow \omega^{m} \oplus X_M \longrightarrow M \longrightarrow 0
\] Likewise, an arbitrary FID hull of $M$ can be written as follows for some non-negative integer $n$.
\[
0 \longrightarrow M \longrightarrow \omega^{n} \oplus Y^{M} \longrightarrow \omega^{n} \oplus X^{M} \longrightarrow 0
\]
\end{proposition}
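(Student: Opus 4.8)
The plan is to start from an arbitrary MCM approximation and repeatedly strip off common direct summands until a minimal one is reached, then invoke the uniqueness of the minimal MCM approximation to identify what remains. Concretely, suppose
\[
0 \longrightarrow Y \xlongrightarrow{\iota} X \longrightarrow M \longrightarrow 0
\]
is an arbitrary MCM approximation of $M$. If it is already minimal, then by uniqueness it is isomorphic (as an exact sequence inducing the identity on $M$) to $0 \to Y_M \to X_M \to M \to 0$, and there is nothing to prove (take $m=0$). Otherwise $Y$ and $X$ have a nonzero common direct summand $N$ via $\iota$, and by the cited consequence of \cite[Proposition 11.7]{LW12} recalled above we have $N \cong \omega^{m_1}$ for some $m_1 \geq 1$.

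Next I would make the reduction step precise. Writing $Y = N \oplus Y'$ and $X = N \oplus X'$ so that $\iota = \mathrm{id}_N \oplus \iota'$, one checks that $\iota'$ is injective and that $\operatorname{coker}(\iota') \cong \operatorname{coker}(\iota) = M$, so that
\[
0 \longrightarrow Y' \xlongrightarrow{\iota'} X' \longrightarrow M \longrightarrow 0
\]
is again exact. Since $Y'$ is a direct summand of the finite injective dimension module $Y$ it again has finite injective dimension, and since $X'$ is a direct summand of the MCM module $X$ it is again MCM (or zero); hence this shorter sequence is again an MCM approximation of $M$. Because the number of minimal generators is additive over direct sums and $\mu_{R}(N) \geq 1$, each such step strictly decreases $\mu_{R}(X)$, so the process terminates after finitely many steps at a minimal MCM approximation. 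By uniqueness this terminal sequence is $0 \to Y_M \to X_M \to M \to 0$. The summands removed along the way assemble into a split sequence $0 \to \omega^{m} \xlongrightarrow{\mathrm{id}} \omega^{m} \to 0 \to 0$ with $m = m_1 + m_2 + \cdots$, and the original sequence is the direct sum of this split sequence with the minimal approximation, which yields the asserted presentation.

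The FID-hull statement I would prove by the dual argument: starting from an arbitrary FID hull $0 \to M \to Y' \xlongrightarrow{\pi} X' \to 0$, any nonzero common direct summand of $Y'$ and $X'$ via $\pi$ is isomorphic to $\omega^{n_i}$ by \cite[Proposition 11.7]{LW12}; stripping it off leaves an exact sequence that is again an FID hull (middle term of finite injective dimension, cokernel MCM or zero), and $\mu_{R}(Y')$ strictly decreases, so the process terminates at the minimal FID hull $0 \to M \to Y^M \to X^M \to 0$, giving the presentation with $n = n_1 + n_2 + \cdots$. I expect the main point requiring care to be the verification that removing a common summand genuinely produces another approximation, namely that exactness is preserved and that the summand conditions (finite injective dimension on one side, MCM on the other) are inherited by direct summands, together with the bookkeeping ensuring that the same power $\omega^{m}$ (respectively $\omega^{n}$) is added to both nonzero terms of the sequence.
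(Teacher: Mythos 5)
Your proposal is correct and follows essentially the same route the paper takes: the paper's own justification (attributed to Ding) is precisely the observation that any common direct summand via $\iota$ (resp.\ $\pi$) is simultaneously MCM and of finite injective dimension, hence isomorphic to a power of $\omega$ by \cite[Proposition 11.7]{LW12}, after which the arbitrary approximation or hull decomposes as the minimal one plus a split sequence of copies of $\omega$. Your additional care about why stripping a summand again yields an approximation, and why the process terminates, just fills in the details the paper leaves implicit.
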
 The following definition gives the notation we use for modules that are isomorphic up to adding or deleting copies of the canonical module. 

\begin{definition} Let $(R,\mathfrak{m})$ be a Cohen-Macaulay local ring with canonical modules $\omega$. We say that two $R$-modules $M$ and $N$ are $\omega$-{\it{stably isomorphic}}, and write $M \cong_{\omega} N$, if for some non-negative integers $s$ and $t$, we have 
\[
M \oplus \omega^{s} \cong N \oplus \omega^{t}.
\]
\end{definition}

\begin{remark}
Suppose $(R,\mathfrak{m})$ is a Cohen-Macaulay local ring with canonical module $\omega$ that is generically Gorenstein. If two $R$-modules $M$ and $N$ are $\omega$-stably isomorphic, then $M_{\mathfrak{p}}$ and $N_{\mathfrak{p}}$ are stably isomorphic $R_{\mathfrak{p}}$-modules for each minimal prime ideal $\mathfrak{p}$ of $R$.
\end{remark}

\begin{proposition}\cite[Propositions 11.17 and 11.19]{LW12}\label{prop:2.4} Let $R$ be a Cohen-Macaulay local ring with canonical module and $M$ a finitely-generated $R$-module. Let $F$ be a free cover of $M$ and $\Omega_{R}^{1}(M)$ the first syzygy of $M$. Then we have the following.\newline
\begin{enumerate}[label=(\alph*)]
\item $Y_{M} \cong_{\omega} Y^{\Omega_{R}^{1}(M)}$ \newline
\item $X^{M} \cong_{\omega} X^{X_M}$ \newline
\item There is a short exact sequence 
\[
0 \longrightarrow F \longrightarrow  \omega^m \oplus X_M \longrightarrow X^{\Omega_{R}^{1}(M)} \longrightarrow 0
\] for some non-negative integer $m$.\\\\
\end{enumerate}\text{}
If $R$ is Gorenstein, then we also have the following.\newline
\begin{enumerate}
\item[(d)] $X_{M} \cong_{\omega} X^{\Omega_{R}^{1}(M)}$ \newline
\item[(e)] $X_{M} \cong_{\omega} \Omega_{R}^{1}(X^M)$ \newline 
\item[(f)] $Y_{M} \cong_{\omega} \Omega_{R}^{1}(Y^M)$
\newline
\end{enumerate}
\end{proposition}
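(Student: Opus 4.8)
The plan is to build each claimed relation from a single pushout or pullback square that splices a (co)free cover into an MCM approximation or FID hull, and then to recognize the resulting three-term sequence as \emph{again} an MCM approximation or FID hull. The two homological inputs I will use repeatedly are the depth lemma (an extension of two MCM modules, and a syzygy of an MCM module, is MCM) and the long exact sequence of $\Ext$ (an extension of two modules of finite injective dimension, and a syzygy of such a module, again has finite injective dimension). Once a constructed sequence is identified as an MCM approximation or FID hull, the structural result recalled at the start of this section (an arbitrary such sequence is the minimal one augmented by a common summand $\omega^{m}$) immediately upgrades the identification to an $\cong_{\omega}$ statement.

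For (a) and (c) I would start from a free cover $0 \to \Omega_R^1(M) \to F \to M \to 0$ and the minimal FID hull $0 \to \Omega_R^1(M) \to Y^{\Omega_R^1(M)} \to X^{\Omega_R^1(M)} \to 0$, and form the pushout $P$ of the two inclusions of $\Omega_R^1(M)$. The pushout yields simultaneously a sequence $0 \to F \to P \to X^{\Omega_R^1(M)} \to 0$ (so $P$ is MCM, being an extension of the MCM module $X^{\Omega_R^1(M)}$ by the free module $F$) and a sequence $0 \to Y^{\Omega_R^1(M)} \to P \to M \to 0$. The latter is an MCM approximation of $M$; comparing it with the minimal one gives $Y^{\Omega_R^1(M)} \cong_{\omega} Y_M$ and $P \cong_{\omega} X_M$, which is (a), while the former sequence (after absorbing the common $\omega$-summand) is (c). Dually, for (b) I would take the minimal FID hull $0 \to X_M \to Y^{X_M} \to X^{X_M} \to 0$ of the MCM module $X_M$ and push out along the approximation surjection $X_M \to M$; one three-term quotient is $0 \to M \to T \to X^{X_M} \to 0$, and the other, $0 \to Y_M \to Y^{X_M} \to T \to 0$, exhibits $T$ as an extension of modules of finite injective dimension, hence of finite injective dimension. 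Thus the first sequence is an FID hull of $M$, giving $X^M \cong_{\omega} X^{X_M}$.

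For the Gorenstein statements I would first record that $\omega \cong R$, so $\cong_{\omega}$ means stable isomorphism; that every MCM module $C$ is totally reflexive, so $\Ext_R^{1}(C,R)=0$ and $C$ is reflexive; and that a finitely generated module has finite injective dimension if and only if it has finite projective dimension. For (d) the sequence $0 \to F \to X_M \to X^{\Omega_R^1(M)} \to 0$ from (c) has MCM cokernel, so $\Ext_R^1(X^{\Omega_R^1(M)},F)=0$, it splits, and $X_M \cong F \oplus X^{\Omega_R^1(M)} \cong_{\omega} X^{\Omega_R^1(M)}$. For (e) the key observation is that for MCM $C$ the dual of a free cover of $C^{\ast}=\Hom_R(C,R)$ is an FID hull $0 \to C \to R^{n} \to \Omega_R^{-1}(C) \to 0$, so $X^{C}\cong_{\omega}\Omega_R^{-1}(C)$; combined with (b) this gives $X^M \cong_{\omega} X^{X_M}\cong_{\omega}\Omega_R^{-1}(X_M)$, and applying $\Omega_R^1$ on the stable category of MCM modules yields (e). For (f) I would pull back the FID hull $0 \to M \to Y^M \to X^M \to 0$ along a free cover $F'' \to Y^M$; the pullback $V$ sits in $0 \to V \to F'' \to X^M \to 0$ (so $V$ is a syzygy of $X^M$, hence MCM) and in $0 \to \Omega_R^1(Y^M) \to V \to M \to 0$, and since $\Omega_R^1(Y^M)$ has finite injective dimension the latter is an MCM approximation of $M$, whence $\Omega_R^1(Y^M)\cong_{\omega} Y_M$.

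I expect the main obstacle to be the bookkeeping that turns each raw pushout/pullback sequence into the precisely stated minimal object: one must verify in each case that the middle term really is MCM and the outer term really has finite injective dimension, so that the structural result applies, and then track the common $\omega^{m}$-summands separating the constructed sequence from the minimal one — this is exactly what forces the statements to be phrased up to $\cong_{\omega}$ rather than honest isomorphism. In the Gorenstein case the extra subtlety is justifying the syzygy/cosyzygy manipulations on the stable category of MCM modules, namely that $\Omega_R^1$ and $\Omega_R^{-1}$ are mutually inverse there and that $\cong_{\omega}$ collapses to stable isomorphism, which is what makes the clean identities (d)--(f) possible.
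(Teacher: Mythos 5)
The paper does not actually prove this proposition: it is quoted from \cite[Propositions 11.17 and 11.19]{LW12}, and the only proof content offered is the remark immediately following it, which notes that (b), (e) and (f) can be read off diagram (2.1). Your constructions are exactly the ones underlying that citation: the pushout of a free presentation of $M$ against the minimal FID hull of $\Omega_{R}^{1}(M)$ is the Auslander--Buchweitz step, your pushout for (b) is precisely diagram (2.1) read along its other exact row and column, and (d)--(f) go through as you describe once one records that over a Gorenstein ring free modules have finite injective dimension (so $\Ext_{R}^{1}(X,F)=0$ for $X$ MCM and $F$ free, and $\Omega_{R}^{1}(Y^{M})$ again has finite injective dimension) and that $\Omega_{R}^{1}$ and $\Omega_{R}^{-1}$ are mutually inverse up to free summands on MCM modules. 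Parts (a), (b), (d), (e), (f) are therefore fine.

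The one step that does not close is (c). Your pushout yields $0 \to F \to P \to X^{\Omega_{R}^{1}(M)} \to 0$ together with the MCM approximation $0 \to Y^{\Omega_{R}^{1}(M)} \to P \to M \to 0$, so all you may conclude is $P \cong \omega^{m} \oplus X_{M}$ for some $m \geq 0$. ``Absorbing the common $\omega$-summand'' is not an operation available on the first sequence: one cannot delete a direct summand of the middle term of a short exact sequence while keeping both outer terms, and here $m$ is genuinely positive in general. Indeed, if $M$ is MCM and not free, then $X_{M}=M$ and $Y_{M}=0$, while $Y^{\Omega_{R}^{1}(M)} \cong \omega^{s}$ with $s=\mu_{R}\bigl(\Hom_{R}(\Omega_{R}^{1}(M),\omega)\bigr)>0$, so the approximation you build is never minimal; moreover no injection $F \hookrightarrow X_{M}=M$ with MCM cokernel need exist at all (take $M=\mathfrak{m}$ over a one-dimensional non-regular Cohen--Macaulay local domain, where $\mu_{R}(M)$ exceeds the rank of $M$). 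What your construction actually establishes --- and what the cited source asserts --- is an exact sequence $0 \to F \to X \to X^{\Omega_{R}^{1}(M)} \to 0$ in which $X \to M$ is an MCM approximation, i.e. $X \cong_{\omega} X_{M}$; the version with the minimal $X_{M}$ itself, as transcribed in the proposition, is what fails, not your diagram. You should state (c) in that weaker form rather than pretend the $\omega$-summands can be stripped out.
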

\begin{remark}
Parts (b), (e), and (f) of Proposition \ref{prop:2.4} can be deduced from the following commutative diagram with exact rows and columns \cite[Diagram 11.3]{LW12}. In the diagram, $n=\mu_{R}\left(\Hom_{R}(X_M,\omega) \right)$.
\begin{equation}\label{diagram:2.1}
\xymatrix{& 0 \ar[d] & 0 \ar[d] \\
& Y_{M} \ar@{=}[r] \ar[d] & Y_{M} \ar[d]\\  
0 \ar[r] &
X_{M} \ar[d] \,\ar[r] & \omega^{n} \ar[r] \ar[d] & X^M \ar@{=}[d] \ar[r]  & 0 \\
0 \ar[r] & M \ar[r] \ar[d] & Y^M \ar[r] \ar[d] & X^M \ar[r] & 0 \\
& 0 & 0 \\
}
\end{equation}
\end{remark}
We now prove a dual result to Proposition \ref{prop:2.4} (a)-(c).
\begin{proposition}
Let $(R,\mathfrak{m})$ be a Cohen-Macaulay local ring with canonical module and $M$ a finitely-generated $R$-module. Let $E_{R}(M)$ be the injective hull of $M$ and $\Omega_{1}^{R}(M)$ the first cosyzygy of $M$. Then we have the following.
\begin{enumerate}[label=(\alph*)] 
\item $X^M \cong_{\omega} X_{\Omega_{1}^{R}(M)}$
\newline 
\item $Y_M \cong Y_{Y^M}$
\newline
\item There is a short exact sequence 
\[
0 \longrightarrow Y_{\Omega_{1}^{R}(M)}  \longrightarrow \omega^n \oplus Y^M \longrightarrow E_{R}(M) \longrightarrow 0
\] for some non-negative integer $n$.
\end{enumerate}\text{}\\
\end{proposition}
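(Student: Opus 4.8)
The plan is to dualize the pushout argument behind Proposition~\ref{prop:2.4}(a),(c)—where one pushes the free cover of $M$ out along the FID hull of $\Omega_{R}^{1}(M)$—by replacing the free cover with the injective hull and the pushout with a pullback. For parts (a) and (c), I would start from the defining sequence of the cosyzygy,
\[ 0 \longrightarrow M \longrightarrow E_{R}(M) \xlongrightarrow{q} \Omega_{1}^{R}(M) \longrightarrow 0, \]
together with the minimal MCM approximation $0 \to Y_{\Omega_{1}^{R}(M)} \to X_{\Omega_{1}^{R}(M)} \xrightarrow{p} \Omega_{1}^{R}(M) \to 0$, and form the pullback $Q := E_{R}(M) \times_{\Omega_{1}^{R}(M)} X_{\Omega_{1}^{R}(M)}$ of $q$ and $p$.

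From the pullback I would read off two short exact sequences. The projection $Q \to X_{\Omega_{1}^{R}(M)}$ has kernel $\ker q \cong M$, yielding
\[ 0 \longrightarrow M \longrightarrow Q \longrightarrow X_{\Omega_{1}^{R}(M)} \longrightarrow 0, \]
and the projection $Q \to E_{R}(M)$ has kernel $\ker p \cong Y_{\Omega_{1}^{R}(M)}$, yielding
\[ 0 \longrightarrow Y_{\Omega_{1}^{R}(M)} \longrightarrow Q \longrightarrow E_{R}(M) \longrightarrow 0. \]
Because $E_{R}(M)$ is injective and $Y_{\Omega_{1}^{R}(M)}$ has finite injective dimension, the second sequence shows $Q$ has finite injective dimension; since $X_{\Omega_{1}^{R}(M)}$ is MCM, the first sequence is therefore an FID hull of $M$. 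Granting minimality (discussed below), uniqueness of the minimal FID hull identifies it with $0 \to M \to Y^{M} \to X^{M} \to 0$, giving $X^{M} \cong X_{\Omega_{1}^{R}(M)}$ and $Q \cong Y^{M}$ (part (a)); feeding $Q \cong Y^{M}$ into the second sequence gives part (c).

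For part (b) I would instead use the commutative diagram~\eqref{diagram:2.1} that assembles the minimal MCM approximation and minimal FID hull of $M$. Its middle column
\[ 0 \longrightarrow Y_{M} \longrightarrow \omega^{n} \longrightarrow Y^{M} \longrightarrow 0, \qquad n = \mu_{R}(\Hom_{R}(X_{M},\omega)), \]
has MCM middle term $\omega^{n}$ and FID kernel $Y_{M}$, so it is an MCM approximation of $Y^{M}$; once it is checked to be minimal, uniqueness of the minimal MCM approximation yields $Y_{Y^{M}} \cong Y_{M}$.

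The main obstacle is precisely upgrading these $\omega$-stable isomorphisms to the genuine isomorphisms in the statement, i.e.\ verifying that the constructed sequences are minimal. By \cite[Proposition 11.7]{LW12} the only candidates for a common direct summand are copies of the indecomposable canonical module $\omega$, so the task is to rule these out. For the FID hull built from the pullback I would do this by exploiting the left-minimality of the injective hull $M \hookrightarrow E_{R}(M)$ together with the minimality of the MCM approximation of $\Omega_{1}^{R}(M)$: a splitting-off of an $\omega$-summand would, through the pullback square, either descend to a common $\omega$-summand of $X_{\Omega_{1}^{R}(M)}$ and $Y_{\Omega_{1}^{R}(M)}$, contradicting minimality of that approximation, or produce a non-automorphism of $E_{R}(M)$ fixing $M$, contradicting minimality of the injective hull; a parallel analysis using \eqref{diagram:2.1} settles (b). I would also flag the set-theoretic caveat that $\Omega_{1}^{R}(M)$, $E_{R}(M)$, and $Q$ are in general not finitely generated, so all MCM approximations and FID hulls here must be taken in the generality in which they exist over a Cohen-Macaulay local ring with canonical module.
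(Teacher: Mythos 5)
Your construction of the FID hull of $M$ --- pulling back the minimal MCM approximation of $\Omega_{1}^{R}(M)$ along $E_{R}(M)\to\Omega_{1}^{R}(M)$ --- is exactly the first half of the paper's argument, and your sequence in (c) is the other pullback projection, just as in the paper. The genuine gap is the minimality step, which you correctly identify as the main obstacle but do not actually close. Your proposed dichotomy does not work as stated: if $\omega$ is a common summand of $Q$ and $X_{\Omega_{1}^{R}(M)}$ via the projection $Q\to X_{\Omega_{1}^{R}(M)}$, it ``descends to a common summand of $Y_{\Omega_{1}^{R}(M)}$ and $X_{\Omega_{1}^{R}(M)}$'' only when that copy of $\omega$ lands in the kernel of the approximation map $X_{\Omega_{1}^{R}(M)}\to\Omega_{1}^{R}(M)$; in the remaining case the summand maps nontrivially to $\Omega_{1}^{R}(M)$, and the ``non-automorphism of $E_{R}(M)$ fixing $M$'' that is supposed to contradict essentiality is never constructed --- the pullback data only hands you a map $\omega\to E_{R}(M)$ lifting that composite, from which no violation of essentiality is apparent. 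So the claimed contradiction is asserted, not proved.

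The paper sidesteps this entirely: it never verifies minimality of the constructed hull. Instead it runs the construction in the opposite direction --- pushing the injective resolution of $M$ out along the minimal FID hull (diagram \ref{diagram:2.2}), splitting off $E_{R}(M)$, and pulling back (diagram \ref{diagram:2.4}) to produce an MCM approximation of $\Omega_{1}^{R}(M)$ with MCM part $X^{M}$. This yields $X^{M}\cong\omega^{n}\oplus X_{\Omega_{1}^{R}(M)}$ alongside your $X_{\Omega_{1}^{R}(M)}\cong\omega^{m}\oplus X^{M}$, whence $X^{M}\cong\omega^{m+n}\oplus X^{M}$ and $m=n=0$ by comparing minimal numbers of generators. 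Part (b) is handled the same way, pairing the sequence $0\to Y_{M}\to\omega^{n}\to Y^{M}\to 0$ from diagram \ref{diagram:2.1} with the pullback in diagram \ref{diagram:2.5} to get containments in both directions. To repair your proof, either supply a genuine argument for the missing horn of your dichotomy or adopt the paper's two-sided cancellation. Your closing caveat that $E_{R}(M)$ and $\Omega_{1}^{R}(M)$ need not be finitely generated is a fair observation, but it applies equally to the paper's own proof and is not a defect of your approach relative to it.
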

\begin{proof}
We first prove (a) and (c). Consider the pullback diagram for the minimal injective resolution of $M$ and the minimal MCM approximation of $\Omega_{1}^{R}(M)$.

\begin{equation}\label{diagram:2.1.1}
\xymatrix{&& 0 \ar[d] & 0 \ar[d] \\
&& M \ar@{=}[r] \ar[d] & M \ar[d]\\  
0 \ar[r] &
Y_{\Omega_{1}^{R}(M)} \ar@{=}[d] \,\ar[r] & Z \ar[r] \ar[d] & E_{R}(M) \ar[d] \ar[r]  & 0 \\
0 \ar[r] & Y_{\Omega_{1}^{R}(M)} \ar[r] & X_{\Omega_{1}^{R}(M)} \ar[r] \ar[d] & \Omega_{1}^{R}(M) \ar[r] \ar[d]& 0 \\
&& 0 & 0 \\
} 
\end{equation}
From the middle row of diagram \ref{diagram:2.1.1}, we see that $Z$ has finite injective dimension. Therefore, the middle column of the diagram 
\begin{equation}
0 \longrightarrow M \longrightarrow Z \longrightarrow X_{\Omega_{1}^{R}(M)} \longrightarrow 0 
\end{equation} is an FID hull of $M$. So $X_{\Omega_{1}^{R}(M)} \cong \omega^{n} \oplus X^M$ and $Z \cong \omega^{n} \oplus Y^M$ for some non-negative integer $n$. The middle row of diagram \ref{diagram:2.1.1} gives us the exact sequence in part (c). For part (b), consider the following exact sequence from the middle column of diagram \ref{diagram:2.1}. 
\begin{equation}\label{sequence:2.8}
0 \longrightarrow Y_M \longrightarrow w^n \longrightarrow Y^M \longrightarrow 0
\end{equation} 
Sequence \ref{sequence:2.8} is an MCM approximation of $Y^M$, so we have $Y_M \cong \omega^s \oplus Y_{Y^M}$ for some non-negative integer $s$. Now consider the pullback diagram for the minimal FID hull of $M$ and the minimal MCM approximation of $Y^M$.
\begin{equation}\label{diagram:2.5}
\xymatrix{& 0 \ar[d] & 0 \ar[d] \\
& Y_{Y^M} \ar@{=}[r] \ar[d] & Y_{Y^M} \ar[d]\\  
0 \ar[r] &
Z \ar[d] \,\ar[r] & X_{Y^M} \ar[r] \ar[d] & X^M \ar@{=}[d] \ar[r]  & 0 \\
0 \ar[r] & M \ar[r] \ar[d] & Y^M \ar[r] \ar[d] & X^M \ar[r] & 0 \\
& 0 & 0 \\
}
\end{equation} From the middle row of diagram \ref{diagram:2.5}, we see that $Z$ is an MCM $R$-module. Therefore, the first column gives us $Y_{Y^M} \cong \omega^t \oplus Y_M$ for some non-negative integer $t$. Since $Y_M \cong \omega^s \oplus Y_{Y^M}$, it folllows that 
\[
Y_{Y^M} \cong \omega^{s+t} \oplus Y_{Y^M}.
\] Therefore, $s=t=0$ and $Y_{M} \cong Y_{Y^M}$.
\end{proof}

\begin{lemma}\label{lemma:2.7}
Let $(R,\mathfrak{m})$ be a Cohen-Macaulay local ring. If
\begin{equation}\label{sequence:2.10.1}
0 \longrightarrow Y \longrightarrow M \longrightarrow X \longrightarrow 0
\end{equation} is an exact sequence, $X$ is an MCM $R$-module, and $Y$ is an $R$-module of finite injective dimension, then the sequence splits and $M \cong Y \oplus X$.
\end{lemma}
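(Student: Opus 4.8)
The plan is to show that the extension class of \eqref{sequence:2.10.1} in $\Ext_R^1(X,Y)$ vanishes; since a short exact sequence splits exactly when its class is zero, this gives $M \cong Y \oplus X$. In fact I would prove the stronger statement that $\Ext_R^i(X,Y) = 0$ for every $i \geq 1$, which is the expected orthogonality between maximal Cohen-Macaulay modules and modules of finite injective dimension. If $X = 0$ or $Y = 0$ the conclusion is immediate, so I may assume both are nonzero.

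First I would record the two numerical inputs. Write $d = \Dim R = \Depth R$, using that $R$ is Cohen-Macaulay. Since $X$ is MCM, $\Depth X = d$. Since $Y$ is a nonzero finitely generated module of finite injective dimension, the Bass formula gives $\operatorname{injdim}_R Y = \Depth R = d$ (e.g.\ \cite[Theorem 3.1.17]{BH93}). Thus the relevant cutoff index $\operatorname{injdim}_R Y - \Depth X$ equals $0$.

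The heart of the argument is the following general fact, which I would prove by induction on $\Depth X$: for any finitely generated $X$, if $Y$ has finite injective dimension then $\Ext_R^i(X,Y) = 0$ for all $i > \operatorname{injdim}_R Y - \Depth X$. When $\Depth X = 0$ this is immediate, since $\Ext_R^i(-,Y)$ vanishes above $\operatorname{injdim}_R Y$. For the inductive step, choose $t \in \mathfrak{m}$ a nonzerodivisor on $X$ (possible as $\Depth X > 0$) and consider $0 \to X \xrightarrow{\,t\,} X \to X/tX \to 0$, where $\Depth(X/tX) = \Depth X - 1$. The long exact sequence of $\Ext_R^{\bullet}(-,Y)$ gives, for each $i > \operatorname{injdim}_R Y - \Depth X$, a surjection $\Ext_R^i(X,Y) \xrightarrow{\,\cdot t\,} \Ext_R^i(X,Y)$, because $\Ext_R^{i+1}(X/tX,Y) = 0$ by the inductive hypothesis. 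As $\Ext_R^i(X,Y)$ is finitely generated and $t \in \mathfrak{m}$, Nakayama's lemma forces $\Ext_R^i(X,Y) = 0$.

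Applying this fact with $\Depth X = d$ and $\operatorname{injdim}_R Y = d$ yields $\Ext_R^i(X,Y) = 0$ for all $i > 0$, in particular $\Ext_R^1(X,Y) = 0$, whence the sequence splits. The main obstacle is precisely the Ext-vanishing statement of the previous paragraph; everything else is formal. I would take care that the induction runs on $\Depth X$ rather than on $d$, that the Bass formula applies because $Y$ is finitely generated, nonzero, and of finite injective dimension, and that the Nakayama step is legitimate because each $\Ext_R^i(X,Y)$ is a finitely generated $R$-module.
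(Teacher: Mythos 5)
Your proof is correct and follows the same route as the paper: both reduce the splitting to the vanishing of $\Ext_R^1(X,Y)$ for $X$ MCM and $Y$ of finite injective dimension. The only difference is that the paper simply cites this vanishing (\cite[Theorem 11.3]{LW12}) whereas you prove it from scratch via the Bass formula and induction on $\Depth X$; your argument is the standard proof of that cited result and is sound, granting (as the paper implicitly does) that $Y$ is finitely generated.
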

\begin{proof}
Applying $\Hom_{R}(-,Y)$, we obtain the following exact sequence.
\begin{equation*}
\Hom_{R}(M,Y) \longrightarrow \Hom_{R}(Y,Y) \longrightarrow \Ext_{R}^{1}(X,Y). 
\end{equation*} Since $X$ is MCM and $Y$ has finite injective dimension, it follows that $\Ext_{R}^{1}(X,Y)=0$ \cite[Theorem 11.3]{LW12}. Therefore, sequence \ref{sequence:2.10.1} splits. 
\end{proof}

\begin{proposition}\label{prop:2.6}
Let $(R,\mathfrak{m})$ be a Cohen-Macaulay local ring with canonical module that is generically Gorenstein. Let $\omega \subseteq R$ be a canonical ideal and let $\omega \subseteq I$ be an ideal of $R$. Then we have the following.\newline
\begin{enumerate}[label=(\alph*)]
\item $X_{I/\omega} \cong_{\omega} X_{I}$ \newline 
\item $Y_{I/\omega}\, \cong_{\omega} Y_{I}$ \newline
\item We have 
\[
X_I \cong \omega \oplus X_{I/\omega} \,\,\,\,\,\, \text{and} \,\,\,\,\,\, Y_{I/\omega} \cong Y_I
\] \,\,\,\,\,\,\,\,\,\,\,\,\,\,\,\,or 
\[
X_I \cong X_{I/\omega} \,\,\,\,\,\, \text{and} \,\,\,\,\,\, Y_{I/\omega} \cong \omega \oplus Y_I.
\]
\item $X^{I} \, \cong_{\omega} \, X^{I/\omega}$
\end{enumerate}\text{}
\end{proposition}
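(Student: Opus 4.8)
The plan is to read off both approximations of $I/\omega$ directly from the defining exact sequence
\[
0 \longrightarrow \omega \longrightarrow I \longrightarrow I/\omega \longrightarrow 0,
\]
exploiting that the canonical ideal $\omega$ is at once maximal Cohen--Macaulay and of finite injective dimension, so $X_\omega=\omega$ and $Y_\omega=0$. Starting from the minimal MCM approximation $0\to Y_I\xrightarrow{\iota}X_I\xrightarrow{\pi}I\to 0$, I would form the preimage $K:=\pi^{-1}(\omega)$, producing exact sequences $0\to K\to X_I\to I/\omega\to 0$ and $0\to Y_I\to K\to\omega\to 0$. Since $\omega$ is MCM and $Y_I$ has finite injective dimension, $\Ext^1_R(\omega,Y_I)=0$ exactly as in Lemma \ref{lemma:2.7}, so the second sequence splits and $K\cong Y_I\oplus\omega$. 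Hence
\[
0 \longrightarrow Y_I\oplus\omega \longrightarrow X_I \longrightarrow I/\omega \longrightarrow 0
\]
is an MCM approximation of $I/\omega$ whose middle term is precisely $X_I$.

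By the normal form for arbitrary MCM approximations recalled above, this differs from the minimal approximation of $I/\omega$ only by trivial copies of $\omega$, so $X_I\cong X_{I/\omega}\oplus\omega^{a}$ and $Y_I\oplus\omega\cong Y_{I/\omega}\oplus\omega^{a}$ for some $a\ge 0$. The whole of (a) and (b) is the single assertion $a=0$, which then yields $X_{I/\omega}\cong X_I$ and $Y_{I/\omega}\cong Y_I\oplus\omega$ simultaneously. To pin $a$ down I would also run the dual construction: pulling the minimal approximation of $I/\omega$ back along $I\twoheadrightarrow I/\omega$ produces a sequence $0\to\omega\to P\to X_{I/\omega}\to 0$, which splits because $\Ext^1_R(X_{I/\omega},\omega)=0$, giving an MCM approximation $0\to Y_{I/\omega}\to \omega\oplus X_{I/\omega}\to I\to 0$ of $I$ and hence $\omega\oplus X_{I/\omega}\cong X_I\oplus\omega^{b}$. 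Substituting and comparing minimal numbers of generators forces $(a+b-1)\mu(\omega)=0$, so $a+b=1$ and $a\in\{0,1\}$.

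The main obstacle is to show $a=0$, i.e.\ that the displayed approximation is already minimal; concretely, that $X_I$ has no nonzero direct summand contained in $K$. First I would use the minimality of the approximation of $I$ (no nonzero summand of $X_I$ lies in $\ker\pi=Y_I$) together with the fact that a homomorphism $\omega^{j}\to\omega$ has rank at most one to reduce to a single dangerous case: a summand $N\cong\omega$ of $X_I$ with $\pi|_N\colon N\xrightarrow{\sim}\omega$ an isomorphism onto $\omega\subseteq I$; equivalently, a lift $\tilde s\colon\omega\to X_I$ of the inclusion $\omega\hookrightarrow I$ (which exists since $\Ext^1_R(X_I,Y_I)=0$) would have to be a split monomorphism. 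Ruling this out is where the real work lies. Applying $\Hom_R(-,\omega)$ to $0\to K\to X_I\to I/\omega\to 0$ and chasing the connecting map, I would identify the splitting of $\tilde s$ with the vanishing of the class of the extension $0\to\omega\to I\to I/\omega\to 0$ in $\Ext^1_R(I/\omega,\omega)$ modulo the image of $\Hom_R(Y_I,\omega)$. The hard part is that non-splitting of this extension alone does not suffice --- although $\omega\hookrightarrow I$ admits no retraction, since $I/\omega$ has positive codimension while $\omega$ is maximal Cohen--Macaulay --- one must also show the canonical summand is genuinely irredundant, and I expect this to require the minimality of the $I$-approximation together with the reflexivity of $\omega$ and the generically Gorenstein hypothesis.

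Finally, (c) should drop out formally from (a). By Proposition \ref{prop:2.4}(b) we have $X^{I}\cong_{\omega}X^{X_I}$ and $X^{I/\omega}\cong_{\omega}X^{X_{I/\omega}}$; since (a) gives $X_I\cong X_{I/\omega}$, and hence $X^{X_I}\cong X^{X_{I/\omega}}$, we conclude $X^{I}\cong_{\omega}X^{I/\omega}$.
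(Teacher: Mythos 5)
Both of your pullback constructions coincide with the paper's: the preimage $K=\pi^{-1}(\omega)$ yielding the approximation $0\to Y_I\oplus\omega\to X_I\to I/\omega\to 0$ is exactly diagram \ref{diagram:2.6}, and the dual construction yielding $0\to Y_{I/\omega}\to\omega\oplus X_{I/\omega}\to I\to 0$ is diagrams \ref{diagram:2.10.1}--\ref{sequence:2.13.1}. But you stop one pullback short, and the step you yourself flag as ``where the real work lies'' is a genuine gap: from $X_I\cong X_{I/\omega}\oplus\omega^{a}$ and $\omega\oplus X_{I/\omega}\cong X_I\oplus\omega^{b}$ you only obtain $a+b=1$, and nothing in your proposal excludes $a=1$. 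The attack you sketch --- deciding whether a lift $\tilde s\colon\omega\to X_I$ of $\omega\hookrightarrow I$ splits by chasing the class of $0\to\omega\to I\to I/\omega\to 0$ in $\Ext_{R}^{1}(I/\omega,\omega)$, invoking reflexivity of $\omega$ and generic Gorensteinness --- is not carried out, and it is not the route the paper takes; as written, parts (a) and (b) are unproved.

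The missing idea is purely formal and requires no summand analysis. Take your approximation $0\to Y_{I/\omega}\to\omega\oplus X_{I/\omega}\to I\to 0$ and form its pullback against the split sequence $0\to X_{I/\omega}\to\omega\oplus X_{I/\omega}\to\omega\to 0$, as in diagram \ref{diagram:2.14.1}. The resulting top row $0\to Z\to X_{I/\omega}\to I\to 0$ has kernel $Z$ sitting in $0\to Z\to Y_{I/\omega}\to\omega\to 0$, so $Z$ has finite injective dimension and the top row is an MCM approximation of $I$ whose middle term is $X_{I/\omega}$ itself, with no extra copy of $\omega$. By the normal form for arbitrary approximations this gives the stronger relation $X_{I/\omega}\cong\omega^{m}\oplus X_I$, i.e.\ $b=m+1\geq 1$, which together with $a+b=1$ forces $a=0$; equivalently, combining $X_{I/\omega}\cong\omega^{m}\oplus X_I$ with $X_I\cong\omega^{a}\oplus X_{I/\omega}$ gives $X_I\cong\omega^{a+m}\oplus X_I$ and hence $a=m=0$, from which (a) and (b) follow at once. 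Your argument for (c) via Proposition \ref{prop:2.4}(b) is correct once (a) is established, though it differs from the paper, which proves (c) independently of (a) by pushing the minimal FID hull of $I$ out along $I\twoheadrightarrow I/\omega$ (diagram \ref{diagram:2.7}).
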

\begin{proof} Consider the pullback diagram for the minimal MCM approximation for $I/\omega$ and the sequence $0 \longrightarrow \omega \longrightarrow I \longrightarrow I/\omega \longrightarrow 0$.

\begin{equation}
\label{diagram:2.10.1}
\xymatrix{&& 0 \ar[d] & 0 \ar[d] \\
&& \omega \ar@{=}[r] \ar[d] & \omega \ar[d]\\  
0 \ar[r] &
Y_{I/\omega} \ar@{=}[d] \,\ar[r] & Z \ar[r] \ar[d] & I \ar[d] \ar[r]  & 0 \\
0 \ar[r] & Y_{I/\omega} \ar[r] & X_{I/\omega} \ar[r] \ar[d] & I/\omega \ar[r] \ar[d]& 0 \\
&& 0 & 0 \\
}
\end{equation}
The middle column of diagram \ref{diagram:2.10.1} is the exact sequence 
\begin{equation}\label{sequence:2.12}
0 \longrightarrow \omega \longrightarrow Z \longrightarrow X_{I/\omega} \longrightarrow 0.
\end{equation} By lemma \ref{lemma:2.7}, sequence \ref{sequence:2.12} splits and $Z \cong \omega \oplus X_{I/\omega}$. Therefore, the middle row of \ref{diagram:2.10.1} gives us the following exact sequence.
\begin{equation}\label{sequence:2.13.1}
0 \longrightarrow Y_{I/\omega} \longrightarrow \omega \oplus X_{I/\omega} \longrightarrow I \longrightarrow 0
\end{equation} Sequence \ref{sequence:2.13.1} is an MCM approximation of $I$. Therefore, for some non-negative integer $m$, we have $Y_{I/\omega} \cong \omega^m \oplus Y_I$ \,and\, $\omega \oplus X_{I/\omega} \cong \omega^m \oplus X_I$. Now consider the pullback diagram for the sequence
$
0 \longrightarrow \omega \longrightarrow I \longrightarrow I/\omega \longrightarrow 0$ and the minimal MCM approximation of $I$.

\begin{equation}\label{diagram:2.6}
\xymatrix{& 0 \ar[d] & 0 \ar[d] \\
& Y_{I} \ar@{=}[r] \ar[d] & Y_{I} \ar[d]\\  
0 \ar[r] &
Z \ar[d] \,\ar[r] & X_{I} \ar[r] \ar[d] & I/\omega \ar@{=}[d] \ar[r]  & 0 \\
0 \ar[r] & \omega \ar[r] \ar[d] & I \ar[r] \ar[d] & I/\omega \ar[r] & 0 \\
& 0 & 0 \\
} 
\end{equation}\\ By lemma \ref{lemma:2.7}, the first column of diagram \ref{diagram:2.6} splits. Therefore, $Z \cong \omega \oplus Y_I$ and the middle row of diagram \ref{diagram:2.6}
\begin{equation}\label{sequence:2.16.1}
0 \longrightarrow \omega \oplus Y_I \longrightarrow X_{I} \longrightarrow I/\omega \longrightarrow 0
\end{equation} is an MCM approximation of $I/\omega$. It follows that $\omega \oplus Y_{I} \cong \omega^n \oplus Y_{I/\omega}$ and $X_{I} \cong \omega^n \oplus X_{I/\omega}$ for some non-negative integer $n$. Since $\omega \oplus X_{I/\omega} \cong \omega^m \oplus X_I$, we have
\[
\omega \oplus X_{I/\omega} \cong \omega^{m+n} \oplus X_{I/\omega}.
\] Therefore, $m+n=1$. We either have $m=0$ and $n=1$ or $m=1$ and $n=0$. This gives us parts (a) through (c). For part (d), consider the pushout diagram for
$
0 \longrightarrow \omega \longrightarrow I \longrightarrow I/\omega \longrightarrow 0$ and the minimal FID hull of $I$.
\begin{equation}\label{diagram:2.7}
\xymatrix{&& 0 \ar[d] & 0 \ar[d] \\
0 \ar[r]& \omega \ar[r] \ar@{=}[d] & I \ar[r] \ar[d] & I/\omega \ar[d] \ar[r] & 0\\  
0 \ar[r] &
\omega \,\ar[r] & Y^I \ar[r] \ar[d] & Z \ar[d] \ar[r]  & 0 \\
&& X^I \ar@{=}[r] \ar[d] & X^I \ar[d] \\
&& 0 & 0 \\
} 
\end{equation} Since $\omega$ and $Y^I$ have finite injective dimension, $Z$ also has finite injective dimension. Therefore, the last column of diagram \ref{diagram:2.7} 
\begin{equation*}
0 \longrightarrow I/\omega \longrightarrow Z \longrightarrow X^I \longrightarrow 0
\end{equation*}\newline is an FID hull of $I/\omega$ and $X^I \cong_{\omega}X^{I/\omega}$.
\end{proof}\text{}
\begin{proposition}\label{prop:2.7}
Let $(R,\mathfrak{m})$ be a Cohen-Macaulay local ring with canonical module that is generically Gorenstein. Let $\omega \subseteq R$ be a canonical ideal and let $I \subseteq \omega$ be an ideal of $R$. Let $E_{R}(\omega)$ be the injective hull of $\omega$. Then we have the following.\newline
\begin{enumerate}[label=(\alph*)]
\item $ X^{I} \cong_{\omega} X_{\omega/I}$
\newline
\item  $Y^{I} \cong_{\omega} Y_{\omega/I}$  \newline 
\item We have 
\[
X_{\omega/I} \cong \omega \oplus X^{I} \,\,\,\,\,\, \text{and} \,\,\,\,\,\, Y^I \cong Y_{\omega/I}
\] \,\,\,\,\,\,\,\,\,\,\,\,\,\,\,\,or 
\[
X_{\omega/I} \cong X^{I} \,\,\,\,\,\, \text{and} \,\,\,\,\,\, Y^{I} \cong \omega \oplus Y_{\omega/I}.
\]
\item $X^{\omega/I} \cong_{\omega} X^{E_{R}(\omega)/I}$\newline
\item $X_{\omega/I} \cong_{\omega} X_{E_{R}(\omega)/I}$ \newline
\end{enumerate} 
\end{proposition}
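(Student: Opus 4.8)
My plan is to treat (a)--(b) from the short exact sequence $0\to I\to\omega\to\omega/I\to 0$ and (c)--(d) from $0\to\omega/I\to E_R(\omega)/I\to\Omega_{1}^{R}(\omega)\to 0$, in each case feeding the sequence into a pushout or pullback against a minimal MCM approximation or minimal FID hull, exactly as in Proposition \ref{prop:2.6} and its proof. The two facts I would lean on repeatedly are Lemma \ref{lemma:2.7} (an extension of an MCM module by a module of finite injective dimension splits) and the elementary observation that finite injective dimension is preserved under extensions, so that the ``middle'' module in each diagram can be recognized as MCM or as having finite injective dimension without further work.

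For (a) and (b) I would form the pullback $Z=\omega\times_{\omega/I}X_{\omega/I}$ of the quotient map $\omega\to\omega/I$ against the minimal MCM approximation $X_{\omega/I}\to\omega/I$. This produces exact sequences $0\to Y_{\omega/I}\to Z\to\omega\to 0$ and $0\to I\to Z\to X_{\omega/I}\to 0$. The first is split by Lemma \ref{lemma:2.7}, so $Z\cong\omega\oplus Y_{\omega/I}$, while the second exhibits $0\to I\to\omega\oplus Y_{\omega/I}\to X_{\omega/I}\to 0$ as an FID hull of $I$, since its middle term has finite injective dimension and its cokernel is MCM. Comparing with the minimal FID hull through the description of arbitrary FID hulls (\cite{Ding90}) then gives $X_{\omega/I}\cong\omega^{m}\oplus X^{I}$ and $\omega\oplus Y_{\omega/I}\cong\omega^{m}\oplus Y^{I}$ for some $m\ge 0$; the two claimed isomorphisms are exactly the assertion that $m=0$.

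The step I expect to be the main obstacle is precisely upgrading these $\omega$-stable statements to the on-the-nose isomorphisms of (a) and (b), i.e. proving $m=0$. The situation differs from Proposition \ref{prop:2.6}: there both modules compared are MCM approximations, so the symmetric two-diagram cancellation forces all $\omega$-slack to vanish, whereas here $X^{I}$ is the MCM part of an FID hull and $X_{\omega/I}$ is an MCM approximation. Running the companion pushout of $I\to\omega$ against the minimal FID hull of $I$ (splitting off $\omega$ by Lemma \ref{lemma:2.7}) only yields $\omega\oplus X^{I}\cong\omega^{k}\oplus X_{\omega/I}$, and combining the two relations leaves $k+m=1$ rather than $k=m=0$; moreover the Remark following the definition of $\cong_{\omega}$ shows that localization at the minimal primes, where $\omega$ is free, cannot detect a single copy of $\omega$, so no rank or length count will decide between $m=0$ and $m=1$. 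I therefore expect to have to prove directly that the FID hull $0\to I\to\omega\oplus Y_{\omega/I}\to X_{\omega/I}\to 0$ is minimal, by showing the inclusion $I\to\omega\oplus Y_{\omega/I}$ is left minimal---every endomorphism of $\omega\oplus Y_{\omega/I}$ restricting to the identity on $I$ is an automorphism---using $\End_{R}(\omega)\cong R$ and the right-minimality of the MCM approximation of $\omega/I$. An appealing alternative would be to deduce (a) and (b) from Proposition \ref{prop:2.6} via the canonical $\Hom_{R}(-,\omega)$-duality relating ideals containing $\omega$ to ideals contained in $\omega$, which would transport the exact isomorphisms there directly; checking that such a duality matches the two constructions is the part I would verify most carefully.

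For (c) and (d) no such difficulty arises, since only $\cong_{\omega}$ is claimed. As $\omega$ has finite injective dimension, so does its first cosyzygy $\Omega_{1}^{R}(\omega)=E_{R}(\omega)/\omega$, so in $0\to\omega/I\to E_{R}(\omega)/I\to\Omega_{1}^{R}(\omega)\to 0$ the cokernel has finite injective dimension. For (c), a pushout of this sequence against the minimal FID hull of $\omega/I$ has middle term of finite injective dimension and presents an FID hull of $E_{R}(\omega)/I$ with MCM part $X^{\omega/I}$, giving $X^{\omega/I}\cong_{\omega}X^{E_{R}(\omega)/I}$. For (d) I would invoke the additivity of MCM approximations along short exact sequences, $X_{B}\cong_{\omega}X_{A}\oplus X_{C}$ (see \cite{LW12}), together with the fact that a module of finite injective dimension has MCM approximation a sum of copies of $\omega$, hence $X_{\Omega_{1}^{R}(\omega)}\cong_{\omega}0$; this yields $X_{\omega/I}\cong_{\omega}X_{E_{R}(\omega)/I}$ at once. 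Both of these I regard as routine relative to the exactness issue in (a) and (b).
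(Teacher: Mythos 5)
Your setup for (a)--(b) reproduces the paper's diagram \ref{diagram:2.22} exactly, and you have correctly isolated the crux: that construction only yields $X_{\omega/I}\cong\omega^{m}\oplus X^{I}$, while the naive companion (pushing out $0\to I\to\omega\to\omega/I\to 0$ along $I\to Y^{I}$ and splitting off $\omega$ by Lemma \ref{lemma:2.7}) gives only the non-minimal MCM approximation $0\to Y^{I}\to\omega\oplus X^{I}\to\omega/I\to 0$, leaving one copy of $\omega$ unaccounted for. But your conclusion that the symmetric two-diagram cancellation is unavailable here is mistaken, and the fixes you propose (a direct left-minimality argument, or a $\Hom_{R}(-,\omega)$-duality) are only named, not carried out. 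The missing step is the same second pullback already used in Proposition \ref{prop:2.6} (diagram \ref{diagram:2.14.1}): pull the sequence $0\to Y^{I}\to\omega\oplus X^{I}\to\omega/I\to 0$ back along the inclusion $X^{I}\hookrightarrow\omega\oplus X^{I}$ (this is the paper's diagram \ref{diagram:2.21}). The resulting kernel $Z$ sits in $0\to Z\to Y^{I}\to\omega\to 0$, hence has finite injective dimension, so $0\to Z\to X^{I}\to\omega/I\to 0$ is an MCM approximation of $\omega/I$ whose MCM part is \emph{exactly} $X^{I}$, giving $X^{I}\cong\omega^{k}\oplus X_{\omega/I}$ on the nose. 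Combined with $X_{\omega/I}\cong\omega^{m}\oplus X^{I}$ this forces $k=m=0$ (count minimal generators), and (b) then follows from Lemma \ref{lemma:2.7} applied to the kernel column of diagram \ref{diagram:2.22}. Without this step your proof of (a) and (b) is incomplete.

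Part (c) matches the paper's argument. For (d), however, you invoke ``additivity of MCM approximations along short exact sequences, $X_{B}\cong_{\omega}X_{A}\oplus X_{C}$,'' attributed to \cite{LW12}. No such statement appears there, and it is false in general: for $0\to A\to B\to C\to 0$ the map $X_{C}\to C$ need not lift along $B\to C$ (the obstruction lies in $\Ext_{R}^{1}(X_{C},A)$, which need not vanish), so one does not obtain an approximation of $B$ with middle term $X_{A}\oplus X_{C}$. The paper avoids this entirely: it runs the same pushout-then-pullback machinery on $0\to I\to E_{R}(\omega)\to E_{R}(\omega)/I\to 0$ (diagrams \ref{diagram:2.27} and \ref{diagram:2.29}), using that $E_{R}(\omega)$ has finite injective dimension together with Lemma \ref{lemma:2.7}, to conclude $X^{I}\cong_{\omega}X_{E_{R}(\omega)/I}$, and then finishes by part (a). You should either adopt that route or actually prove the additivity you need in this special case; as written, (d) rests on an unsupported claim.
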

\begin{proof} Consider the pushout diagram for the sequence $0 \longrightarrow I \longrightarrow \omega \longrightarrow \omega/I \longrightarrow 0$ and the minimal FID hull for $I$.

\begin{equation}\label{diagram:2.9}
\xymatrix{&& 0 \ar[d] & 0 \ar[d] \\
&0 \ar[r]& I \ar[r] \ar[d] & \omega \ar[r] \ar[d] & \omega/I \ar@{=}[d] \ar[r] & 0\\  
&0 \ar[r] &
Y^I \,\ar[r] \ar[d] & Z \ar[r] \ar[d] & \omega/I \ar[r]  & 0 \\
&& X^I \ar@{=}[r] \ar[d] & X^I \ar[d] \\
&& 0 & 0 \\
}
\end{equation}
From the middle column of diagram \ref{diagram:2.9}, we obtain the exact sequence
\begin{equation}\label{sequence:2.10}
0 \longrightarrow \omega \longrightarrow Z \longrightarrow X^I \longrightarrow 0.
\end{equation} This sequence splits by lemma \ref{lemma:2.7}.  The middle row of diagram \ref{diagram:2.9} then gives us the following exact sequence.
\begin{equation}\label{sequence:2.11}
0 \longrightarrow Y^I \longrightarrow \omega \oplus X^I \longrightarrow \omega/I \longrightarrow 0
\end{equation} Therefore, for some non-negative integer $m$, we have $Y^I \cong \omega^m \oplus Y_{\omega/I}$ and $\omega \oplus X^I \cong \omega^m \oplus X_{\omega/I}$. Now consider the pullback diagram for $0 \longrightarrow I \longrightarrow \omega \longrightarrow \omega/I \longrightarrow 0$ and the minimal MCM approximation of $\omega/I$.

\begin{equation}
\label{diagram:2.22}
\xymatrix{&& 0 \ar[d] & 0 \ar[d] \\
&& Y_{\omega/I} \ar@{=}[r] \ar[d] & Y_{\omega/I} \ar[d]\\  
0 \ar[r] &
I \ar@{=}[d] \,\ar[r] & Z \ar[r] \ar[d] & X_{\omega/I} \ar[d] \ar[r]  & 0 \\
0 \ar[r] & I \ar[r] & \omega \ar[r] \ar[d] & \omega/I \ar[r] \ar[d]& 0 \\
&& 0 & 0 \\
}
\end{equation}
Since the middle column splits by lemma \ref{lemma:2.7}, the middle row gives us the exact sequence
\begin{equation}\label{sequence:2.26}
0 \longrightarrow I \longrightarrow \omega \oplus Y_{\omega/I} \longrightarrow X_{\omega/I} \longrightarrow 0.
\end{equation}
Sequence \ref{sequence:2.26} is an FID hull of $I$. It follows that $\omega \oplus Y_{\omega/I} \cong \omega^n \oplus Y^I$ and $X_{\omega/I} \cong \omega^n \oplus X^I$ for some non-negative integer $n$. Since we also have $\omega \oplus X^I \cong \omega^m \oplus X_{\omega/I}$, it follows that
\[
\omega \oplus X^{I} \cong \omega^{m+n} \oplus  X^{I}.
\]
Therefore, $m+n=1$. So either $m=0$ and $n=1$ or $m=1$ and $n=0$. This gives us parts (a) through (c). For part (d), consider the pushout diagram for 
\[
0 \longrightarrow I \longrightarrow \omega \longrightarrow \omega/I \longrightarrow 0
\]
and the minimal injective resolution of $\omega$.
\begin{equation}\label{diagram:2.12}
\xymatrix{
&& 0  \ar[d] & 0 \ar[d]\\  
0 \ar[r] &
I \ar@{=}[d] \,\ar[r] & \omega \ar[r] \ar[d] & \omega/I \ar[d] \ar[r]  & 0 \\
0 \ar[r] & I \ar[r] & E_{R}(\omega) \ar[r] \ar[d] & Z \ar[r] \ar[d]& 0 \\
&& \Omega_{1}^{R}(\omega) \ar[d] \ar@{=}[r] & \Omega_{1}^{R}(\omega) \ar[d] \\
&& 0  & 0 \\
} 
\end{equation} 
From the middle row of diagram \ref{diagram:2.12}, we have $Z \cong E_{R}(\omega)/I$. Therefore, the last column of diagram \ref{diagram:2.12} gives us the exact sequence 
\begin{equation}\label{sequence:2.13}
0 \longrightarrow \omega/I \longrightarrow E_{R}(\omega)/I \longrightarrow \Omega_{1}^{R}(\omega) \longrightarrow 0.
\end{equation} Now consider the pushout diagram for sequence \ref{sequence:2.13} and the minimal FID hull of $\omega/I$.

\begin{equation}\label{diagram:2.14}
\xymatrix{&& 0 \ar[d] & 0 \ar[d] \\
&0 \ar[r]& \omega/I \ar[r] \ar[d] & Y^{\omega/I} \ar[r] \ar[d] & X^{\omega/I} \ar@{=}[d] \ar[r] & 0\\  
&0 \ar[r] &
E_{R}(\omega)/I \,\ar[r] \ar[d] & Z' \ar[r] \ar[d] & X^{\omega/I} \ar[r]  & 0 \\
&& \Omega_{1}^{R}(\omega) \ar@{=}[r] \ar[d] & \Omega_{1}^{R}(\omega) \ar[d] \\
&& 0 & 0 \\
}
\end{equation} From the middle column of diagram \ref{diagram:2.14}, we see that $Z'$ has finite injective dimension. The middle row is the exact sequence
\begin{equation}\label{sequence:2.15}
0 \longrightarrow E_{R}(\omega)/I \longrightarrow Z' \longrightarrow X^{\omega/I} \longrightarrow 0.
\end{equation} Sequence \ref{sequence:2.15} is an FID hull for $E_{R}(\omega)/I$, so $X^{\omega/I} \cong_{\omega} X^{E_{R}(\omega)/I}$. For part (e), consider the pushout diagram for the sequence $0 \longrightarrow I \longrightarrow E_{R}(\omega) \longrightarrow E_{R}(\omega)/I \longrightarrow 0$ and the minimal FID hull of $I$.

\begin{equation}\label{diagram:2.27}
\xymatrix{&& 0 \ar[d] & 0 \ar[d] \\
&0 \ar[r]& I \ar[r] \ar[d] & Y^{I} \ar[r] \ar[d] & X^{I} \ar@{=}[d] \ar[r] & 0\\  
&0 \ar[r] &
E_{R}(\omega) \,\ar[r] \ar[d] & Z \ar[r] \ar[d] & X^{I} \ar[r]  & 0 \\
&& E_{R}(\omega)/I \ar@{=}[r] \ar[d] & E_{R}(\omega)/I \ar[d] \\
&& 0 & 0 \\
}
\end{equation}
Since the middle row of diagram \ref{diagram:2.27} splits, the middle column gives us the exact sequence 
\begin{equation}\label{sequence:2.28}
0 \longrightarrow Y^I \longrightarrow E_{R}(\omega) \oplus X^I \longrightarrow E_{R}(\omega)/I \longrightarrow 0.
\end{equation} Consider the pullback diagram for sequence \ref{sequence:2.28} and the exact sequence
\[
0 \longrightarrow Y_{E_R(\omega)} \longrightarrow X_{E_R(\omega)} \oplus X^I \longrightarrow E_{R}(\omega)  \oplus X^I \longrightarrow 0.
\]

\begin{equation}\label{diagram:2.29}
\xymatrix{&& 0 \ar[d] & 0 \ar[d] \\
&& Y_{E_{R}(\omega)} \ar@{=}[r] \ar[d]& Y_{E_{R}(\omega)} \ar[d] \\
&0 \ar[r]& Z' \ar[r] \ar[d] & X_{E_{R}(\omega)} \oplus X^{I} \ar[r] \ar[d] & E_{R}(\omega)/I \ar@{=}[d] \ar[r] & 0\\  
&0 \ar[r] &
Y^I \,\ar[r] \ar[d] & E_{R}(\omega) \oplus X^I \ar[r] \ar[d] & E_{R}(\omega)/I \ar[r]  & 0 \\
&& 0 & 0 \\
} 
\end{equation}
From the first column of diagram \ref{diagram:2.29}, we see that $Z'$ has finite injective dimension. Since $X_{E_{R}(\omega)}$ has finite injective dimension, we have $X_{E_{R}(\omega)} \cong \omega^s$ for some non-negative integer $s$. Therefore, the middle row is an MCM approximation of $E_{R}(\omega)/I$ and $X^I \cong_{\omega} X_{E_{R}(\omega)/I}$. By part (a), we have $X_{\omega/I} \cong_{\omega} X_{E_{R}(\omega)/I}$.
\end{proof}\text{}
\begin{proposition}\label{prop:2.8}
Let $(R,\mathfrak{m})$ be a Cohen-Macaulay local ring of dimension $d$ that is generically Gorenstein and not Gorenstein. Let $\omega$ be a canonical ideal of $R$ and $x \in \omega$ an $R$-regular element. Then we have the following.\newline
\begin{enumerate}[label=(\alph*)]
\item $\omega/xR$ is a Cohen-Macaulay $R$-module of codimension $1$ \newline 
\item $\left(\omega/xR \right)^{\vee}:=\Ext_{R}^{1}(\omega/xR, \omega)$ is a Cohen-Macaulay $R$-module of codimension $1$
\newline 
\item $X_{\omega/xR} \cong_{\omega} X_{\left(\omega/xR \right)^{\vee}} \cong_{\omega} X^{R/\omega}$ \newline 
\item There is an exact sequence 
\[
0 \longrightarrow R \longrightarrow \omega^{n} \longrightarrow X^{R/\omega} \longrightarrow 0
\] with\, $n=\mu_{R}(\omega)$.
\end{enumerate}
\end{proposition}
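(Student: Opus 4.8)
The plan is to handle the four parts in order, extracting (b) and the first isomorphism of (c) from a single self-duality computation, and then obtaining the second isomorphism of (c) and part (d) by assembling Propositions \ref{prop:2.6} and \ref{prop:2.7} with the diagram \eqref{diagram:2.1}.

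For (a), I would start from the short exact sequence
\[
0 \longrightarrow R \xrightarrow{\,\cdot x\,} \omega \longrightarrow \omega/xR \longrightarrow 0,
\]
which is exact because $x$ is $R$-regular. Since $R$ and the canonical module $\omega$ both have depth $d$, the depth lemma gives $\Depth_R(\omega/xR)\geq d-1$. To bound the dimension I would localize at a minimal prime $\mathfrak{p}$: as $x$ is regular it lies in no minimal prime, so $\omega\not\subseteq\mathfrak{p}$, $\omega_{\mathfrak{p}}=R_{\mathfrak{p}}$ with $x$ a unit, and hence $(\omega/xR)_{\mathfrak{p}}=0$. Thus every prime in the support has height $\geq 1$, so $\Dim(\omega/xR)\leq d-1$, while $\omega/xR\neq 0$ because $\omega$ is not principal ($R$ is not Gorenstein). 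Combining $d-1\leq\Depth\leq\Dim\leq d-1$ shows $\omega/xR$ is Cohen-Macaulay of codimension $1$.

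For (b), I would apply $\Hom_R(-,\omega)$ to the same sequence. Since $\omega/xR$ is torsion by (a) and $\omega$ is torsion-free, $\Hom_R(\omega/xR,\omega)=0$; using $\Hom_R(\omega,\omega)\cong R$, $\Hom_R(R,\omega)\cong\omega$, and $\Ext_R^1(\omega,\omega)=0$ (as $\omega$ is MCM), the long exact sequence collapses to
\[
0 \longrightarrow R \xrightarrow{\,f\,} \omega \longrightarrow \Ext_R^1(\omega/xR,\omega) \longrightarrow 0.
\]
The crux is identifying $f$: precomposition with $\cdot x$ sends the identity of $\omega$ to multiplication by $x$, so under the identifications $f$ is again $\cdot x$, with cokernel $\omega/xR$. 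Hence $(\omega/xR)^{\vee}\cong\omega/xR$, which is Cohen-Macaulay of codimension $1$ by (a), proving (b); in particular $X_{(\omega/xR)^{\vee}}\cong X_{\omega/xR}$, giving the first isomorphism of (c).

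For the remaining isomorphism of (c), since $xR\subseteq\omega$, Proposition \ref{prop:2.7}(a) gives $X^{xR}\cong X_{\omega/xR}$, and $xR\cong R$ gives $X^{xR}\cong X^{R}$; since $\omega\subseteq R$, Proposition \ref{prop:2.6}(c) with $I=R$ gives $X^{R}\cong_{\omega}X^{R/\omega}$, so $X_{\omega/xR}\cong_{\omega}X^{R/\omega}$. For (d), I would observe that $0\to\omega\to R\to R/\omega\to 0$ is the minimal MCM approximation of $R/\omega$ (the middle term $R$ is MCM, $\omega$ has finite injective dimension, and they share no common summand since $R$ is not Gorenstein), so $X_{R/\omega}\cong R$. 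Feeding $M=R/\omega$ into the diagram \eqref{diagram:2.1}, whose middle row reads $0\to X_{R/\omega}\to\omega^{n}\to X^{R/\omega}\to 0$ with $n=\mu_{R}(\Hom_R(X_{R/\omega},\omega))=\mu_{R}(\omega)$, yields the asserted sequence. The main obstacle is the self-duality step in (b): everything rests on the two vanishing facts $\Hom_R(\omega/xR,\omega)=0$ and $\Ext_R^1(\omega,\omega)=0$ together with the correct identification of the connecting map as multiplication by $x$; once $(\omega/xR)^{\vee}\cong\omega/xR$ is established, the rest is bookkeeping with the earlier propositions.
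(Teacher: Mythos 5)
Your proposal is correct, and in parts (a)--(c) it takes a genuinely different route from the paper's. The paper works throughout with the sequence $0 \to \omega/xR \to R/xR \to R/\omega \to 0$: part (a) is a depth/dimension count on that sequence, part (b) is quoted from \cite[Theorem 3.3.10]{BH93}, and the isomorphism $X_{(\omega/xR)^{\vee}} \cong_{\omega} X^{R/\omega}$ is extracted from the dualized sequence $0 \to R/\omega \to \omega/x\omega \to (\omega/xR)^{\vee} \to 0$ via a pullback against the minimal MCM approximation of $(\omega/xR)^{\vee}$, with the remaining links supplied by Proposition \ref{prop:2.4}(b) and Proposition \ref{prop:2.7}(a). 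You instead work with $0 \to R \xrightarrow{\,x\,} \omega \to \omega/xR \to 0$ and prove the self-duality $(\omega/xR)^{\vee} \cong \omega/xR$ by identifying the induced map $\Hom_R(\omega,\omega) \to \Hom_R(R,\omega)$ with multiplication by $x$; the computation is right (the needed vanishings $\Hom_R(\omega/xR,\omega)=0$ and $\Ext_R^1(\omega,\omega)=0$ both hold, the first because $\omega/xR$ is torsion and $\omega \subseteq R$ is torsion-free, the second because $\omega$ is MCM of finite injective dimension), and it makes (b) and the first isomorphism of (c) immediate -- indeed it gives the honest isomorphism $X_{(\omega/xR)^{\vee}} \cong X_{\omega/xR}$, slightly stronger than the stated $\cong_{\omega}$. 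Your chain $X_{\omega/xR} \cong X^{xR} \cong X^{R} \cong_{\omega} X^{R/\omega}$ is also valid; the paper reaches $X^{R/\omega} \cong_{\omega} X^{R}$ via Proposition \ref{prop:2.4}(b) with $X_{R/\omega} \cong R$ rather than via Proposition \ref{prop:2.6}(c) with $I=R$, but the two are interchangeable. Part (d) coincides with the paper's argument. What the self-duality buys is a shorter, more explicit proof of (b)--(c); what the paper's route buys is the auxiliary sequence $0 \to R/\omega \to \omega/x\omega \to (\omega/xR)^{\vee} \to 0$, which exhibits $X_{(\omega/xR)^{\vee}}$ as $X^{R/\omega}$ directly, without passing through $\omega/xR$.
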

\begin{proof}
Consider the sequence 
\begin{equation}\label{sequence:2.16}
0 \longrightarrow \omega/xR \longrightarrow R/xR \longrightarrow R/\omega \longrightarrow 0.
\end{equation} We have
\[
\text{dim}_{R}(R/xR)=\text{max}\{\text{dim}_{R}(\omega/xR),\,\, \text{dim}_{R}(R/\omega) \},
\] so\, $\text{dim}_{R}(\omega/xR) \leq d-1$. By \cite[Corollary 2.1.4 and Proposition 3.3.18]{BH93}, we have $\Dim_{R}(R/\omega) = d-1$. Applying this equality and the depth lemma to the sequence
\[
0 \longrightarrow \omega \longrightarrow R \longrightarrow R/\omega \longrightarrow 0,
\] we obtain $\Depth_{R}(R/\omega)=d-1$. Applying this equality, the depth lemma, and the inequality $\Dim_{R}(\omega/xR) \leq d-1$ to sequence \ref{sequence:2.16}, we obtain $\text{dim}_{R}(\omega/xR)=\text{depth}_{R}(\omega/xR)=d-1$.\newline\newline Since $\omega/xR$ is a Cohen-Macaulay $R$-module of codimension $1$, it follows from \cite[Theorem 3.3.10]{BH93} that $\left(\omega/xR \right)^{\vee}=\Ext_{R}^{1}(\omega/xR, \omega)$ is also a Cohen-Macaulay $R$-module of codimension $1$. Applying $\Hom_{R}(-,\omega)$ to sequence \ref{sequence:2.16} gives us the following exact sequence.\newline
\[
0 \longrightarrow \Ext_{R}^{1}(R/\omega, \omega) \longrightarrow \Ext_{R}^{1}(R/xR, \omega) \longrightarrow \Ext_{R}^{1}(\omega/xR, \omega) \longrightarrow 0
\]\text{}\newline Since $\Ext_{R}^{1}(R/\omega,\omega)$ is the canonical module for Gorenstein ring $R/\omega$, we have $\Ext_{R}^{1}(R/\omega,\omega) \cong R/\omega$. Similarly, we have $\Ext_{R}^{1}(R/xR,\omega) \cong \omega/x\omega$ \cite[Theorem 3.3.10, Proposition 3.3.18]{BH93}. This gives us the following exact sequence.
\begin{equation}\label{sequence:2.17}
0 \longrightarrow R/\omega \longrightarrow \omega/x\omega \longrightarrow \left(\omega/xR \right)^{\vee} \longrightarrow 0
\end{equation} Now consider the pullback diagram for sequence \ref{sequence:2.17} and the minimal MCM approximation of $\left(\omega/xR \right)^{\vee}$.
\begin{equation}\label{diagram:2.18}
\xymatrix{&& 0 \ar[d] & 0 \ar[d] \\
&& Y_{\left(\omega/xR \right)^{\vee}} \ar@{=}[r] \ar[d] & Y_{\left(\omega/xR \right)^{\vee}} \ar[d]\\  
0 \ar[r] &
R/\omega \ar@{=}[d] \,\ar[r] & Z \ar[r] \ar[d] & X_{\left(\omega/xR \right)^{\vee}} \ar[d] \ar[r]  & 0 \\
0 \ar[r] & R/\omega \ar[r] & \omega/x\omega \ar[r] \ar[d] & \left(\omega/xR \right)^{\vee} \ar[r] \ar[d]& 0 \\
&& 0 & 0 \\
} 
\end{equation} \text{}\newline Since $Y_{\left(\omega/xR \right)^{\vee}}$ and $\omega/x\omega$ have finite injective dimension, it follows that $Z$ also has finite injective dimension. Therefore, the middle row of diagram \ref{diagram:2.18}
\begin{equation}
0 \longrightarrow R/\omega \longrightarrow Z \longrightarrow X_{\left(\omega/xR \right)^{\vee}} \longrightarrow 0
\end{equation} is an FID hull of $R/\omega$, and $X_{\left(\omega/xR \right)^{\vee}} \cong_{\omega} X^{R/\omega}$. By Proposition \ref{prop:2.4} (b), we have $X^{R/\omega} \cong_{\omega} X^R$. Therefore, $X_{\left(\omega/xR \right)^{\vee}} \cong_{\omega} X^R$. By Proposition \ref{prop:2.7} (a), we have $X_{\omega/xR} \cong_{\omega} X^{R}$. This gives us part (c). Finally, the short exact sequence in part (d) is obtained by letting $M=R/\omega$ in diagram \ref{diagram:2.1} above.
\end{proof}
\section{$\text{SC}_r$-conditions for MCM modules}

In this section, $(R,\mathfrak{m})$ is a Gorenstein complete local ring. Using arguments from the proof of \cite[Theorem 2.2]{YI00}, we prove an inductive criterion for determining when an MCM $R$-module satisfies the $\SC_r$-condition. For $r>0$, we let CM$^{r}(R)$ denote the class of Cohen-Macaulay $R$-modules of codimension $r$. We let $\text{CM}(R)$ denote the class of MCM $R$-modules. Recall that two $R$-modules $M$ and $N$ are stably isomorphic if there are free $R$-modules $F$ and $G$ such that $M \oplus F \cong N \oplus G$. If $M$ and $N$ are stably isomorphic, we write $M \overset{st}{\cong} N$. For an $R$-module $M$ and a positive integer $i \geq 0$, we let $\Omega^{i}_{R}(M)$ denote the $i$th syzygy module of $M$.
\newline
\begin{definition}\cite[Definition 2.1]{KK07}\label{definition:3.1}\,
Let $R$ be a $d$-dimensional Cohen-Macaulay local ring with canonical module and let \,$0\leq r\leq d$. An MCM $R$-module $X$ satisfies the $\text{SC}_{r}$-{\it{condition}} if there is a finitely-generated $R$-module $M$ of codimension $r$ such that $X_M \overset{st}{\cong} X$. If every MCM $R$-module satisfies the $\text{SC}_{r}$-condition, we say that $R$ satisfies the SC$_r$-condition.\end{definition}
\text{}
\begin{proposition}\cite[Proposition 2.2]{KK07}\label{prop:3.2}
Let $R$ be a Gorenstein complete local ring and let $X$ be an MCM $R$-module. Let $r$ be a positive integer. The following are equivalent.\newline
\begin{enumerate}[label = (\arabic*)]
\item $X$ satisfies the $\SC_r$-condition; there is a finitely-generated $R$-module $M$ of codimension $r$ such that $X_M \overset{st}{\cong}X$.\newline 
\item There is a Cohen-Macaulay $R$-module $C$ of codimension $r$ such that $X_C \overset{st}{\cong} X$.
\end{enumerate}
\end{proposition}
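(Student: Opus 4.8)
The implication $(2)\Rightarrow(1)$ is immediate, since a Cohen-Macaulay module is in particular finitely generated, so the content is $(1)\Rightarrow(2)$: starting from an arbitrary finitely generated $M$ of codimension $r$ with $X_M\overset{st}{\cong}X$, I must manufacture a \emph{Cohen-Macaulay} module of the same codimension with the same minimal MCM approximation up to free summands (recall $\omega\cong R$, so $\cong_{\omega}$ coincides with $\overset{st}{\cong}$ here). The plan is to pass to a Gorenstein quotient over which $M$ has maximal dimension, replace $M$ by a maximal Cohen-Macaulay module there, and transport the computation back to $R$. Since $R$ is Cohen-Macaulay, $\operatorname{grade}_R M=\operatorname{codim}_R M=r$, so $\Ann_R M$ contains an $R$-regular sequence $\mathbf{x}=x_1,\dots,x_r$. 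Set $\bar R:=R/\mathbf{x}R$; it is again a complete local Gorenstein ring, now of dimension $d-r=:n$, and $M$ is a $\bar R$-module with $\dim_{\bar R}M=n$. The crucial observation is that every finitely generated free $\bar R$-module has finite projective dimension over $R$ (it is resolved by the Koszul complex on $\mathbf{x}$), hence finite injective dimension over $R$ because $R$ is Gorenstein; thus $\bar R$-free modules are legitimate ``FID'' terms for the approximation theory over $R$.

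The engine of the argument is the following lemma, which I would prove first. \emph{Key Lemma.} If $0\to A\to Y\to B\to 0$ is exact with $Y$ of finite injective dimension, then $X_A\overset{st}{\cong}\Omega^1_R(X_B)$. To see this, take the minimal MCM approximation $0\to Y_B\to X_B\xrightarrow{\pi}B\to 0$ and form the pullback of $\pi$ along $Y\twoheadrightarrow B$, obtaining a module $P$ sitting in two short exact sequences $0\to Y_B\to P\to Y\to 0$ and $0\to A\to P\to X_B\to 0$. From the first, $P$ has finite injective dimension (these are closed under extensions); the second is then an FID hull of $A$, so up to copies of $\omega$, i.e.\ up to free summands \cite[Proposition 11.7]{LW12}, we get $X^A\overset{st}{\cong}X_B$. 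Combining with Proposition \ref{prop:2.4}(e) gives $X_A\overset{st}{\cong}\Omega^1_R(X^A)\overset{st}{\cong}\Omega^1_R(X_B)$. Applied to each sequence $0\to\Omega^{j}_{\bar R}(M)\to\bar R^{b_{j-1}}\to\Omega^{j-1}_{\bar R}(M)\to 0$ coming from a $\bar R$-free resolution of $M$, the lemma yields $X_{\Omega^{n}_{\bar R}(M)}\overset{st}{\cong}\Omega^{n}_R(X_M)$; and applied to the cosyzygy sequences $0\to\Omega^{-(j-1)}_{\bar R}(D)\to\bar R^{c_j}\to\Omega^{-j}_{\bar R}(D)\to 0$ of the maximal Cohen-Macaulay $\bar R$-module $D:=\Omega^n_{\bar R}(M)$, it yields $X_{\Omega^{-j}_{\bar R}(D)}\overset{st}{\cong}\Omega^{-1}_R\bigl(X_{\Omega^{-(j-1)}_{\bar R}(D)}\bigr)$, so each cosyzygy over $\bar R$ applies $\Omega^{-1}_R$ to the $R$-approximation.

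Putting these together, I would set $C:=\Omega^{-n}_{\bar R}\bigl(\Omega^{n}_{\bar R}(M)\bigr)$, a maximal Cohen-Macaulay $\bar R$-module (here $\Omega^{-1}_{\bar R}$ is the cosyzygy built from a complete resolution over the Gorenstein ring $\bar R$, and $n=\dim\bar R$ guarantees $\Omega^n_{\bar R}(M)$ is already maximal Cohen-Macaulay over $\bar R$). Since $\mathbf{x}$ is $R$-regular and annihilates $C$, depth and dimension over $R$ and over $\bar R$ coincide, so $C$ is a Cohen-Macaulay $R$-module with $\dim_R C=n=d-r$, i.e.\ of codimension $r$. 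The two chains above give $X_C\overset{st}{\cong}\Omega^{-n}_R\bigl(X_{\Omega^n_{\bar R}(M)}\bigr)\overset{st}{\cong}\Omega^{-n}_R\Omega^{n}_R(X_M)\overset{st}{\cong}X_M\overset{st}{\cong}X$, using that $\Omega^{-n}_R\Omega^{n}_R$ is the identity on the stable category of maximal Cohen-Macaulay modules. Hence $C\in\CM^{r}(R)$ witnesses the $\SC_r$-condition for $X$.

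The main obstacle is the change-of-rings bookkeeping: controlling how the minimal MCM approximation over $R$ changes under $\bar R$-syzygies and, more delicately, under $\bar R$-\emph{cosyzygies}, and verifying that the two effects cancel exactly so that one lands back on $X_M$ rather than on a shift $\Omega^{\pm n}_R(X_M)$. This is precisely why I take first $n$ syzygies and then $n$ cosyzygies over $\bar R$; the syzygy half is routine, but the cosyzygy half needs the Gorenstein hypothesis on $\bar R$ (for complete resolutions, hence a well-defined $\Omega^{-1}_{\bar R}$ landing in maximal Cohen-Macaulay modules) together with the Gorenstein hypothesis on $R$ (for Proposition \ref{prop:2.4}(e) and for the equivalence of finite projective and finite injective dimension, so that $\bar R$-free modules serve as FID terms). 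Completeness enters through Krull--Schmidt, which makes the manipulations ``up to free summands'' well behaved.
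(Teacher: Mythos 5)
The paper offers no proof of this proposition --- it is quoted from Kato \cite[Proposition 2.2]{KK07} --- so there is no internal argument to compare against; what you have written is a self-contained proof assembled from the paper's own toolkit, and it is essentially correct. Your Key Lemma is sound: it is exactly the pullback construction used throughout Section 2, and its conclusion $X_A \overset{st}{\cong} \Omega_R^1(X_B)$ is Proposition \ref{prop:2.4}(d)--(e) relativized from a free cover to an arbitrary short exact sequence with middle term of finite injective dimension. The device of taking $n$ syzygies and then $n$ cosyzygies over the Gorenstein quotient $\bar{R}=R/\mathbf{x}R$ so that the induced shifts $\Omega_R^{1}$ and $\Omega_R^{-1}$ cancel on the stable category of MCM $R$-modules is the same mechanism as Lemma \ref{lemma:3.6}, transplanted from $R$ to $\bar{R}$; the bookkeeping you worry about does close up, since $\Omega_R^{-n}\Omega_R^{n}$ is stably the identity on MCM modules over the Gorenstein ring $R$ and $C=\Omega_{\bar{R}}^{-n}\bigl(\Omega_{\bar{R}}^{n}(M)\bigr)$ is MCM over $\bar{R}$, hence Cohen--Macaulay of codimension $r$ over $R$.

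There is one degenerate case you do not address. If $\Omega_{\bar{R}}^{n}(M)$ is $\bar{R}$-free --- equivalently $\operatorname{pd}_{\bar{R}}M<\infty$, which forces $\operatorname{pd}_R M<\infty$ --- then $\Omega_{\bar{R}}^{-n}\bigl(\Omega_{\bar{R}}^{n}(M)\bigr)=0$, which is not a module of codimension $r$, and your construction produces no witness. The patch is one sentence: if $\operatorname{pd}_R M<\infty$ then $X_M$ has finite projective dimension (both $Y_M$ and $M$ do, $R$ being Gorenstein) and is MCM, hence free by Auslander--Buchsbaum, so $X$ is stably free and therefore free; one may then take $C=R/\mathbf{x}R$ itself, which lies in $\CM^{r}(R)$ and has $X_C$ free for the same reason, giving $X_C\overset{st}{\cong}X$. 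With that caveat added, your argument is complete; it is worth noting that it proves slightly more than stated, since only $\operatorname{codim}M\geq r$ is used to extract the regular sequence from $\Ann_R(M)$, while the output $C$ has codimension exactly $r$.
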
\text{}

The classes of rings satisfying the $\SC_{r}$-conditions are ordered by inclusion as follows.
\newline
\begin{proposition}\cite[Proposition 2.5]{KK07}\label{prop:3.3}
\,Let $R$ be a Gorenstein complete local ring and let $X$ be an MCM $R$-module. Let $r>0$. If $X$ satisfies the $\SC_{r+1}$-condition, then $X$ satisfies the $\SC_{r}$-condition. Therefore, if $R$ satisfies the $\SC_{r+1}$-condition, then $R$ satisfies the $\SC_{r}$-condition.
\end{proposition}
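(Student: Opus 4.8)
The plan is to lower the codimension by exactly one by direct-summing with a perfect module, using the fact that a module of finite projective dimension contributes nothing to the MCM approximation up to stable isomorphism. Suppose $X$ satisfies the $\SC_{r+1}$-condition. By Definition \ref{definition:3.1} there is a finitely-generated $R$-module $M$ of codimension $r+1$ with $X_M \overset{st}{\cong} X$; by Proposition \ref{prop:3.2} one may take $M$ Cohen-Macaulay, though this is not needed. Since $R$ is Gorenstein of dimension $d \geq r+1$ we have $\Depth R = d \geq r$, so I would fix a regular sequence $x_1,\dots,x_r \in \mathfrak{m}$ and set $L := R/(x_1,\dots,x_r)$. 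Then $L$ is a perfect $R$-module, Cohen-Macaulay of codimension $r$ with $\Dim_R L = d-r$, resolved by the Koszul complex on $x_1,\dots,x_r$.

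Next I would form $D := M \oplus L$ and check that it has codimension exactly $r$. Since the support of a direct sum is the union of the supports, $\Dim_R D = \max(\Dim_R M,\, \Dim_R L) = \max(d-r-1,\, d-r) = d-r$, so $D$ has codimension $r$, one less than the codimension of $M$. It then remains to identify $X_D$. By the uniqueness of minimal MCM approximations, these are additive over direct sums, so $X_D \overset{st}{\cong} X_M \oplus X_L$, and the entire computation reduces to showing that $X_L$ is stably zero, i.e.\ free.

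The crux is that a perfect module has a free MCM approximation. Because $R$ is Gorenstein, finite projective dimension and finite injective dimension coincide, so $L$ has finite injective dimension. In the minimal MCM approximation $0 \to Y_L \to X_L \to L \to 0$, the module $Y_L$ has finite injective dimension, hence so does $X_L$; but $X_L$ is also MCM, and an MCM module of finite injective dimension over a Cohen-Macaulay local ring with canonical module $\omega$ is isomorphic to a power of $\omega$ (the fact recorded in the common-summand discussion preceding Proposition \ref{prop:2.4}). Here $\omega = R$, so $X_L$ is free and $X_L \overset{st}{\cong} 0$. Therefore $X_D \overset{st}{\cong} X_M \overset{st}{\cong} X$, and since $D$ has codimension $r$, Definition \ref{definition:3.1} shows that $X$ satisfies the $\SC_r$-condition. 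Applying this to every MCM module yields the statement that $\SC_{r+1}$ for $R$ implies $\SC_r$ for $R$.

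I do not expect a serious obstacle here: the whole content is the recognition that direct-summing with the perfect module $L$ drops the codimension by exactly one while leaving the MCM approximation unchanged up to free summands. The only steps that require care are the additivity of minimal MCM approximations over direct sums and the standard identification of an MCM module of finite injective dimension with a power of $\omega$, both of which are already available in the paper.
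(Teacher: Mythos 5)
Your argument is correct, and it is worth noting that the paper itself supplies no proof of this proposition: it is quoted verbatim from Kato \cite[Proposition 2.5]{KK07}, so the only comparison available is with Kato's original argument, which likewise proceeds by padding with a complete-intersection module; your write-up is essentially that standard proof, made self-contained. All the key steps check out. Since $r+1\leq d$, a regular sequence of length $r$ exists and $L=R/(x_1,\dots,x_r)$ is perfect of codimension $r$; $\Dim_R(M\oplus L)=\max(d-r-1,\,d-r)=d-r$ because the support of a direct sum is the union of the supports, and Definition \ref{definition:3.1} only asks for a finitely generated module of codimension $r$ (your $D$ is not Cohen--Macaulay, but that is irrelevant here, and Proposition \ref{prop:3.2} would let you trade it for one if needed). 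Two small points of phrasing deserve care. First, ``minimal MCM approximations are additive over direct sums'' is slightly too strong: the direct sum of the two minimal approximations is an MCM approximation of $M\oplus L$ that need not be minimal, but by the proposition of Ding quoted at the start of Section 2 its MCM part differs from $X_{M\oplus L}$ only by copies of $\omega=R$, which is exactly the stable isomorphism $X_D\overset{st}{\cong}X_M\oplus X_L$ you use. Second, the inference ``$Y_L$ has finite injective dimension, hence so does $X_L$'' needs the fact that $L$ itself has finite injective dimension (which you did establish from perfection over a Gorenstein ring) so that the middle term of $0\to Y_L\to X_L\to L\to 0$ inherits it; then $X_L$ is MCM of finite injective dimension, hence a power of $\omega=R$ by \cite[Proposition 11.7]{LW12}, i.e.\ free. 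With those readings the proof is complete.
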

\text{}
\begin{remark}
Let $R$ be a Gorenstein complete local ring and let $X$ be an MCM $R$-module that satisfies the $\SC_{r}$-condition. By Proposition \ref{prop:3.2}, we have $X \overset{st}{\cong} X_C$, where $C$ is a Cohen-Macaulay $R$-module of codimension $r$. Let $C^{\vee}=\text{Ext}_{R}^{r}(C,R)$. Then we have $X_{C} \cong \text{Hom}_{R}(\Omega_{R}^{r}(C^{\vee}),R)$ and $X \overset{st}{\cong} \text{Hom}_{R}(\Omega_{R}^{r}(C^{\vee}),R)$ by \cite[Proposition 11.15]{LW12}.
\end{remark}\text{}

\begin{proposition}\cite[Proposition 2.5]{KK07}\label{prop:3.4}
Let $R$ be a Gorenstein complete local ring and let $r$ be a positive integer. If $R$ satisfies the $\SC_r$-condition, then $R_{\mathfrak{p}}$ is regular for each prime ideal $\mathfrak{p}$ of $R$ with $\Ht\mathfrak{p}<r$.
\end{proposition}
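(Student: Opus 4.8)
The plan is to exploit the $\SC_r$-condition to force a high syzygy of the residue field of $R_{\mathfrak{p}}$ to become free after localization, and then invoke the Auslander--Buchsbaum--Serre theorem. So fix a prime $\mathfrak{p}$ with $\Ht \mathfrak{p} < r$, set $d = \Dim R$, and let $k(\mathfrak{p}) = R_{\mathfrak{p}}/\mathfrak{p}R_{\mathfrak{p}}$; the goal is to show $\Pd_{R_{\mathfrak{p}}} k(\mathfrak{p}) < \infty$.

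The heart of the argument is a localization lemma for codimension-$r$ modules. Let $C$ be a Cohen--Macaulay $R$-module of codimension $r$, with minimal MCM approximation $0 \to Y_C \to X_C \to C \to 0$. Since $\Ann_R(C)$ has grade (hence height) $r$, every prime in $\text{Supp}(C) = V(\Ann_R C)$ has height at least $r$, so the hypothesis $\Ht \mathfrak{p} < r$ gives $C_{\mathfrak{p}} = 0$. Localizing the approximation at $\mathfrak{p}$ therefore yields an isomorphism $(X_C)_{\mathfrak{p}} \cong (Y_C)_{\mathfrak{p}}$. Here $(X_C)_{\mathfrak{p}}$ is MCM over $R_{\mathfrak{p}}$ because localizations of MCM modules are MCM, while $(Y_C)_{\mathfrak{p}}$ has finite injective dimension over $R_{\mathfrak{p}}$ because finite injective dimension is preserved under localization. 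Thus $(X_C)_{\mathfrak{p}}$ is simultaneously MCM and of finite injective dimension over the Cohen--Macaulay local ring $R_{\mathfrak{p}}$, so by the fact recalled in Section 2 it is isomorphic to a power of the canonical module of $R_{\mathfrak{p}}$. As $R$ is Gorenstein, $R_{\mathfrak{p}}$ is Gorenstein and its canonical module is $R_{\mathfrak{p}}$ itself; hence $(X_C)_{\mathfrak{p}}$ is free.

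With the lemma established, I would finish as follows. Consider the MCM $R$-module $X := \Omega_R^d(R/\mathfrak{p})$. By the $\SC_r$-condition and Proposition \ref{prop:3.2}, there is a Cohen--Macaulay $R$-module $C$ of codimension $r$ with $X \overset{st}{\cong} X_C$. Localizing this stable isomorphism and applying the lemma, $X_{\mathfrak{p}} \overset{st}{\cong} (X_C)_{\mathfrak{p}}$ is free, so $X_{\mathfrak{p}}$ is stably free and therefore free over the local ring $R_{\mathfrak{p}}$. On the other hand, localizing a free resolution of $R/\mathfrak{p}$ gives a free resolution of $k(\mathfrak{p})$, so $X_{\mathfrak{p}}$ is a $d$-th syzygy of $k(\mathfrak{p})$ over $R_{\mathfrak{p}}$; its freeness shows $\Pd_{R_{\mathfrak{p}}} k(\mathfrak{p}) \leq d < \infty$, and hence $R_{\mathfrak{p}}$ is regular.

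I expect the localization lemma to be the main obstacle, since it is where the three inputs must be combined at $R_{\mathfrak{p}}$: the vanishing $C_{\mathfrak{p}} = 0$ (from the codimension hypothesis), the stability of both MCM-ness and finite injective dimension under localization, and the structural fact that over the Gorenstein ring $R_{\mathfrak{p}}$ a module that is at once MCM and of finite injective dimension must be free. The remaining step is comparatively routine once one chooses the right test module, namely a syzygy of $R/\mathfrak{p}$ of order at least $d$, so that freeness of its localization is exactly the statement that $k(\mathfrak{p})$ has finite projective dimension.
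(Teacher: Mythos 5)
The paper does not prove this proposition; it is quoted verbatim from Kato \cite{KK07} as background, so there is no internal proof to compare against. Your argument is correct and is essentially the standard one (and, to the best of my knowledge, Kato's own): the key localization lemma --- that $C_{\mathfrak{p}}=0$ forces $(X_C)_{\mathfrak{p}}\cong (Y_C)_{\mathfrak{p}}$ to be simultaneously MCM and of finite injective dimension, hence free over the Gorenstein ring $R_{\mathfrak{p}}$ --- combined with applying the $\SC_r$-condition to $\Omega_R^d(R/\mathfrak{p})$ and invoking Auslander--Buchsbaum--Serre, is exactly the right mechanism, and every step (height of $\Ann_R C$, stability of MCM-ness and of finite injective dimension under localization, stably free implies free over a local ring) checks out.
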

\text{}
\begin{definition}\cite[]{YI00}
Let $R$ be a Cohen-Macaulay local ring with canonical module $\omega$. Let\, $D_{R}(-):=\text{Hom}_{R}(-,\omega)$ and let $M$ be an MCM $R$-module. For each integer \,$i<0$, we define the $R$-module $\Omega_{R}^{i}(M)$ \,by\, $\Omega_{R}^{i}(M):=D_{R}(\Omega_{R}^{-i}(D_{R}(M))).$
\end{definition}
\text{}
\begin{lemma}\label{lemma:3.6}
Let $R$ be a $d$-dimensional Gorenstein complete local ring. Let $M$ be a Cohen-Macaulay $R$-module of codepth $r$. Then for any $r \leq t \leq d$, we have $X_M \overset{st}{\cong} \Omega_{R}^{-t}(\Omega_{R}^{t}(M))$.
\end{lemma}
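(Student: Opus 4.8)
The plan is to exploit that $R$ is Gorenstein, so that $\omega \cong R$ and $D_{R}(-) = \Hom_{R}(-,R)$ is the ordinary dual, under which every $\MCM$ module is totally reflexive; in particular $\Omega_{R}^{-1}$ and $\Omega_{R}^{1}$ will be mutually inverse on $\MCM$ modules up to stable isomorphism. The case $r=0$ is immediate, since then $M$ is $\MCM$, $X_M \overset{st}{\cong} M$, and the claim reduces to the reflexivity statement below; so I assume $r \geq 1$. Two depth computations set everything up. First, applying the depth lemma repeatedly to the syzygy sequences $0 \to \Omega_{R}^{i}(M) \to F_{i-1} \to \Omega_{R}^{i-1}(M) \to 0$, together with $\Depth_{R}(M) = d-r$, shows $\Depth_{R}(\Omega_{R}^{i}(M)) \geq \Min\{d,\, d-r+i\}$, so $\Omega_{R}^{t}(M)$ is $\MCM$ for every $t \geq r$. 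Second, the depth lemma applied to the minimal $\MCM$ approximation $0 \to Y_M \to X_M \to M \to 0$ gives $\Depth_{R}(Y_M) \geq d-r+1$; since $Y_M$ has finite injective, hence finite projective, dimension, Auslander--Buchsbaum yields $\Pd_{R}(Y_M) \leq r-1$, so $\Omega_{R}^{t}(Y_M) = 0$ for all $t \geq r$.

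The first key step is to transfer syzygies from $M$ to its approximation $X_M$. Resolving $Y_M$ (minimally) and $M$ and applying the horseshoe lemma to $0 \to Y_M \to X_M \to M \to 0$ produces a free resolution of $X_M$ together with a short exact sequence of $t$-th syzygies $0 \to \Omega_{R}^{t}(Y_M) \to \Omega_{R}^{t}(X_M)' \to \Omega_{R}^{t}(M) \to 0$, where $\Omega_{R}^{t}(X_M)'$ is the syzygy computed from the horseshoe resolution. For $t \geq r$ the left-hand term vanishes, so $\Omega_{R}^{t}(X_M)' \cong \Omega_{R}^{t}(M)$; since syzygies taken from different resolutions agree up to stable isomorphism, this gives
\[
\Omega_{R}^{t}(X_M) \overset{st}{\cong} \Omega_{R}^{t}(M) \qquad (t \geq r).
\]

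The second key step is the reflexivity identity $\Omega_{R}^{-t}(\Omega_{R}^{t}(N)) \overset{st}{\cong} N$ for every $\MCM$ module $N$. For $t=1$ I would dualize $0 \to \Omega_{R}^{1}(N) \to F \to N \to 0$; because $N$ is $\MCM$ and $R$ is Gorenstein, $\Ext_{R}^{1}(N,R)=0$, so $0 \to D_{R}(N) \to D_{R}(F) \to D_{R}(\Omega_{R}^{1}(N)) \to 0$ is exact with $D_{R}(F)$ free, exhibiting $D_{R}(N)$ as a first syzygy of $D_{R}(\Omega_{R}^{1}(N))$; applying $D_{R}$ and using $D_{R}D_{R}(N) \cong N$ gives the $t=1$ case. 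The general case follows by induction, once one checks $\Omega_{R}^{-t}\circ \Omega_{R}^{-1} \overset{st}{\cong} \Omega_{R}^{-(t+1)}$ and $\Omega_{R}^{t+1} = \Omega_{R}^{1}\circ \Omega_{R}^{t}$ on $\MCM$ modules. Applying this with $N = X_M$ and combining with the transfer step finishes the proof:
\[
\Omega_{R}^{-t}(\Omega_{R}^{t}(M)) \overset{st}{\cong} \Omega_{R}^{-t}(\Omega_{R}^{t}(X_M)) \overset{st}{\cong} X_M \qquad (r \leq t \leq d),
\]
using that $\Omega_{R}^{-t}$ preserves stable isomorphism.

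The main obstacle is the bookkeeping around stable isomorphism: I must verify that $D_{R}$, the ordinary syzygies, and the derived negative syzygies $\Omega_{R}^{-t}$ all respect $\overset{st}{\cong}$ (adding or deleting free summands), and that the definition $\Omega_{R}^{-t}(N)=D_{R}(\Omega_{R}^{t}(D_{R}(N)))$ composes correctly, i.e.\ $\Omega_{R}^{-t}\circ\Omega_{R}^{-1}\overset{st}{\cong}\Omega_{R}^{-(t+1)}$. These all rest on total reflexivity of $\MCM$ modules over the Gorenstein ring $R$. The transfer step of the second paragraph, where the finite projective dimension of $Y_M$ kills the high syzygies, is the crucial bridge between the module $M$ and its $\MCM$ approximation $X_M$.
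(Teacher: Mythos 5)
Your proposal is correct and follows essentially the same route as the paper: apply the horseshoe lemma to the minimal MCM approximation, use the depth lemma plus finite projective dimension of $Y_M$ to kill (the paper: make free) its high syzygies so that $\Omega_R^{t}(M)\overset{st}{\cong}\Omega_R^{t}(X_M)$, and then invoke MCM duality over the Gorenstein ring (the paper cites \cite{BH93}*{Theorem 3.3.10}, you prove the needed reflexivity identity $\Omega_R^{-t}(\Omega_R^{t}(N))\overset{st}{\cong}N$ directly) to recover $X_M$. The only differences are expository: you make the depth and Auslander--Buchsbaum bookkeeping explicit, which the paper leaves implicit.
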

\begin{proof}
Taking minimal projective resolutions of $Y_M$ and $M$, we apply the horseshoe lemma to the minimal MCM approximation of $M$,
\[
0 \longrightarrow Y_M \longrightarrow X_M \longrightarrow M \longrightarrow 0.
\] Since $M$ has codepth $r$, it follows that $\Omega_{R}^t(M)$ is an MCM $R$-module and $\Omega_{R}^{t}(Y_M)$ is an MCM $R$-module of finite projective dimension, and therefore free by \cite[Proposition 11.7]{LW12}. So $\Omega_{R}^{t}(M) \overset{st}{\cong} \Omega_{R}^{t}(X_M)$. The rest of the proof follows from \cite[Theorem 3.3.10]{BH93}.
\end{proof}\text{}
\begin{lemma}\cite[proof of Theorem 1.4]{YI00}\label{lemma:3.7}
Let ${\bf{x}} \in \mathfrak{m}$ be an $R$-regular sequence, and let $M$ be a finitely-generated $R/{\bf{x}}R$-module. For $n \geq 0$, we have \,$\Omega_R^{n+1}(M) \overset{st}{\cong} \Omega_R^n(\Omega_{R/{\bf{x}}R}^1(M))$.
\end{lemma}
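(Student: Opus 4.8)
The plan is to compare a free cover of $M$ over $R$ with a free cover over $\bar R:=R/{\bf x}R$ by means of the snake lemma, and then to pass to higher syzygies. I would first treat a single regular element $x$, with $\bar R=R/xR$, since this already displays the whole mechanism. Let $F=R^{b}\twoheadrightarrow M$ be a free cover over $R$, where $b=\mu_R(M)$. Since $xM=0$, this map factors through the free cover $\bar F:=F/xF\twoheadrightarrow M$ over $\bar R$, and the quotient map $F\twoheadrightarrow\bar F$ is surjective with kernel $xF$; as $x$ is $R$-regular and $F$ is free, $xF\cong F$ is free. Applying the snake lemma to the two free covers, with the identity on $M$, produces the short exact sequence
\[
0\longrightarrow xF\longrightarrow \Omega_R^1(M)\longrightarrow \Omega_{\bar R}^1(M)\longrightarrow 0,
\]
whose kernel is free.

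From here the key step is to take syzygies. Choosing a free cover $G\twoheadrightarrow\Omega_R^1(M)$ and composing with the surjection onto $\Omega_{\bar R}^1(M)$, the kernel of $G\to\Omega_{\bar R}^1(M)$ is an extension of the free module $xF$ by $\Omega_R^1(\Omega_R^1(M))$, hence splits, and Schanuel's lemma identifies it with $\Omega_R^1(\Omega_{\bar R}^1(M))$ up to free summands. Thus $\Omega_R^2(M)\overset{st}{\cong}\Omega_R^1(\Omega_{\bar R}^1(M))$, and applying $\Omega_R^{n-1}$, which preserves stable isomorphism, yields $\Omega_R^{n+1}(M)\overset{st}{\cong}\Omega_R^n(\Omega_{\bar R}^1(M))$. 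For a regular sequence ${\bf x}=x_1,\dots,x_c$ the same snake-lemma comparison of the free covers of $M$ over $R$ and over $\bar R$ gives
\[
0\longrightarrow ({\bf x})^{b}\longrightarrow \Omega_R^1(M)\longrightarrow \Omega_{\bar R}^1(M)\longrightarrow 0,
\]
where now the kernel is $({\bf x})^{b}\cong\bigl(\Omega_R^1(\bar R)\bigr)^{b}$. Because ${\bf x}$ is a regular sequence, the Koszul complex is a finite free resolution of $\bar R$, so $\Omega_R^1(\bar R)=({\bf x})$ has finite projective dimension. Feeding the displayed sequence into the horseshoe lemma gives, for each $n$, a short exact sequence $0\to\Omega_R^n(({\bf x})^{b})\to\Theta_n\to\Omega_R^n(\Omega_{\bar R}^1(M))\to 0$ with $\Theta_n\overset{st}{\cong}\Omega_R^{n+1}(M)$; once $\Omega_R^n(({\bf x})^{b})$ vanishes the middle term becomes $\Omega_R^n(\Omega_{\bar R}^1(M))$, and the stable isomorphism follows.

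The main obstacle is precisely that the snake-lemma kernel does not split off $\Omega_R^1(M)$: the desired identification is invisible at the level of $\Omega_R^1$ and surfaces only after one has applied enough syzygy functors to absorb the contribution of $({\bf x})^{b}$. For a single element this kernel is free and a single syzygy suffices; in general one must pass to $\Omega_R^n$ for $n$ beyond the projective dimension of $\bar R$, so that $\Omega_R^n(({\bf x})^{b})$ vanishes. Pinning down this range, and invoking Schanuel's lemma to convert the resulting equality of syzygies into a stable isomorphism, is where the real work lies.
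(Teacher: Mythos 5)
The paper gives no argument for this lemma at all---it is quoted from the proof of \cite[Theorem 1.4]{YI00}, where it appears for a single non-zerodivisor---so there is no internal proof to compare against. Your argument (snake lemma on the two free covers, then absorption of the kernel $({\bf x})^{b}$ by taking enough syzygies, with Schanuel converting the result into a stable isomorphism) is the standard one and is correct as far as it goes: for a single regular element it yields $\Omega_R^{n+1}(M)\overset{st}{\cong}\Omega_R^{n}(\Omega_{R/xR}^{1}(M))$ for all $n\geq 1$, and for a sequence of length $c$ it yields the isomorphism for all $n\geq c$. (One small correction to your closing remark: since $\Pd_R\bigl(({\bf x})\bigr)=c-1$, the module $\Omega_R^{n}\bigl(({\bf x})^{b}\bigr)$ already vanishes for $n\geq c=\Pd_R(R/{\bf x}R)$, not only for $n$ strictly beyond it.)

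You should not try to close the remaining range $0\leq n<c$: the statement is false there, so the restriction you ran into is a defect of the statement as printed (``for $n\geq 0$''), not of your proof. For $n=0$, $c=1$, take $R=k[[x,y]]$, ${\bf x}=x$, $M=k$: then $\Omega_R^{1}(M)=\mathfrak{m}$ is a torsion-free $R$-module of rank one, while $\Omega_{R/xR}^{1}(M)\cong R/xR$ is torsion, so the two are not stably isomorphic. For $c=2$, $n=1$, take $R=k[[x,y,z]]$, ${\bf x}=x,y$, $M=k$: then $\Omega_R^{1}\bigl(\Omega_{R/{\bf x}R}^{1}(k)\bigr)\cong(x,y)$ satisfies $\Ext_R^{1}\bigl((x,y),R\bigr)\cong R/(x,y)\cong k[[z]]$, whereas $\Ext_R^{1}\bigl(\Omega_R^{2}(k),R\bigr)\cong\Ext_R^{3}(k,R)\cong k$; since $\Ext_R^{1}(-,R)$ is unchanged by adding free summands, $\Omega_R^{2}(k)$ and $\Omega_R^{1}\bigl(\Omega_{R/{\bf x}R}^{1}(k)\bigr)$ are not stably isomorphic. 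The lemma should therefore carry the hypothesis $n\geq\operatorname{length}({\bf x})$ (equivalently, be stated for one non-zerodivisor and $n\geq 1$, as in \cite{YI00}). This does not affect the paper: every invocation in the proof of Proposition \ref{prop:3.12} has $n\geq r-2=\operatorname{length}({\bf x})$, which is exactly the range your argument establishes.
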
\text{}
\begin{remark}
The main result of this section-Proposition \ref{prop:3.12}-builds on the following results. 
\end{remark}\text{}
\begin{proposition}\cite[Corollary 11.23]{LW12}\label{prop:3.9}
Let $R$ be a Cohen-Macaulay local ring with canonical module. Assume $R$ is generically Gorenstein. Then the following statements are equivalent.\newline
\begin{enumerate}[label = (\arabic*)]
\item $R$ is a domain. \newline 
\item $R$ satisfies the $\SC_{1}$-condition.
\end{enumerate}
\end{proposition}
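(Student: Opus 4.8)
The plan is to prove both implications, assuming throughout that $d=\dim R\geq 1$ (for $d=0$ the condition $\SC_1$ lies outside the range $0\le r\le d$ of Definition \ref{definition:3.1}). The organizing device is, for each minimal prime $\mathfrak p$ of $R$, the generic length $\lambda_{\mathfrak p}(N):=\operatorname{length}_{R_{\mathfrak p}}(N_{\mathfrak p})$ together with $\ell_{\mathfrak p}:=\operatorname{length}_{R_{\mathfrak p}}(R_{\mathfrak p})$. Because $R$ is generically Gorenstein, each $R_{\mathfrak p}$ is a zero-dimensional Gorenstein ring with $\omega_{\mathfrak p}\cong R_{\mathfrak p}$, so $\lambda_{\mathfrak p}(\omega)=\ell_{\mathfrak p}$. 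I will show that the $\SC_1$-condition forces $\lambda_{\mathfrak p}$ to behave like an honest rank, and that this rigidity is exactly equivalent to $R$ being a domain.

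For $(1)\Rightarrow(2)$, let $R$ be a domain and $C$ a nonzero MCM module, so $C$ is torsion-free of some rank $r\ge 1$ and both $C$ and $\omega^r$ have rank $r$ at the unique minimal prime $0$. First I would produce $\iota\colon\omega^r\to C$ that is an isomorphism after $-\otimes_R\operatorname{Frac}(R)$: localizing at $0$, $\Hom_R(\omega^r,C)$ yields all of $\Hom_{\operatorname{Frac}(R)}(\operatorname{Frac}(R)^r,\operatorname{Frac}(R)^r)$, which contains isomorphisms, and clearing denominators gives such an $\iota$. Replacing $\iota$ by $x\iota$ for an $R$-regular $x\in\mathfrak m$ (available since $\operatorname{depth}R=d\geq 1$) keeps it generically an isomorphism while pushing its image into $\mathfrak m C$; its cokernel $M$ is then torsion with $\dim M=d-1$, since $M$ surjects onto $C/xC$. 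Thus $0\to\omega^r\to C\to M\to 0$ is an MCM approximation of the codimension-one module $M$, and because $x\iota(\omega^r)\subseteq\mathfrak m C$ the inclusion admits no common $\omega$-summand, so the approximation is minimal. By uniqueness of the minimal MCM approximation this gives $X_M\cong C$, so $C\overset{st}{\cong}X_M$ and $C$ satisfies the $\SC_1$-condition.

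For $(2)\Rightarrow(1)$, the key input is that a module $Y$ of finite injective dimension satisfies $\lambda_{\mathfrak p}(Y)=\chi\cdot\ell_{\mathfrak p}$ with $\chi\in\mathbb Z$ independent of $\mathfrak p$: setting $P:=\Hom_R(\omega,Y)$, the standard equivalence between modules of finite injective dimension and modules of finite projective dimension gives $Y\cong\omega\otimes_R P$ with the relevant Tor vanishing, and $P$, having a finite free resolution, has constant rank $\chi$ (its Euler characteristic); hence $(\omega\otimes_RP)_{\mathfrak p}\cong R_{\mathfrak p}^{\chi}$. Applying this to $Y_M$ in $0\to Y_M\to X_M\to M\to 0$ with $M$ of codimension $1$ (so $M_{\mathfrak p}=0$) gives $\lambda_{\mathfrak p}(X_M)=\lambda_{\mathfrak p}(Y_M)=\chi_M\ell_{\mathfrak p}$, and adding free summands shows that any MCM module $C$ satisfying $\SC_1$ has $\lambda_{\mathfrak p}(C)/\ell_{\mathfrak p}$ a $\mathfrak p$-independent integer. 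To contradict this when $R$ is not a domain, I would test $C=\Omega_{R}^{d}(R/\mathfrak p_1)$ for a minimal prime $\mathfrak p_1$, which is MCM as a $d$-th syzygy; the exact sequence $0\to C\to F_{d-1}\to\cdots\to F_0\to R/\mathfrak p_1\to 0$ gives $\lambda_{\mathfrak p}(C)=e\,\ell_{\mathfrak p}+(-1)^d\lambda_{\mathfrak p}(R/\mathfrak p_1)$ with $e$ independent of $\mathfrak p$. Since $\lambda_{\mathfrak p_1}(R/\mathfrak p_1)=1$, divisibility of $\lambda_{\mathfrak p_1}(C)$ by $\ell_{\mathfrak p_1}$ forces $\ell_{\mathfrak p_1}=1$, so every $R_{\mathfrak p_i}$ is a field and $R$ is reduced; and if a second minimal prime $\mathfrak p_2$ existed, then $\lambda_{\mathfrak p_2}(R/\mathfrak p_1)=0$ would make $\lambda_{\mathfrak p_1}(C)$ and $\lambda_{\mathfrak p_2}(C)$ differ by $(-1)^d$, contradicting $\mathfrak p$-independence. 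Hence $R$ is reduced with a unique minimal prime, that is, a domain.

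The main obstacle is the $(2)\Rightarrow(1)$ direction, and specifically the structural fact that finite injective dimension forces the generic length to be a uniform multiple $\chi\cdot\ell_{\mathfrak p}$ with $\chi$ independent of $\mathfrak p$; once this is in hand, the rest is bookkeeping with lengths along short exact sequences. A secondary subtlety, in $(1)\Rightarrow(2)$, is arranging that the constructed approximation be minimal (so that $X_M$ equals $C$, not merely $C\cong_{\omega}X_M$), which is the reason for pushing the map into $\mathfrak m C$ before taking the cokernel.
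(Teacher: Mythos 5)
The paper does not actually prove Proposition \ref{prop:3.9}; it is imported verbatim from \cite[Corollary 11.23]{LW12}, so there is no internal proof to compare against. Your argument is correct and amounts to a self-contained reconstruction of the standard one. For $(1)\Rightarrow(2)$, the construction of a generic isomorphism $\omega^{r}\to C$, multiplied by a regular element so that its image lies in $\mathfrak m C$ and the resulting sequence $0\to\omega^{r}\to C\to M\to 0$ is a \emph{minimal} MCM approximation of a codimension-one module, is exactly right; your observation that landing in $\mathfrak m C$ rules out a common $\omega$-summand is the correct way to get $X_M\cong C$ on the nose rather than merely $C\cong_{\omega}X_M$. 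For $(2)\Rightarrow(1)$, your key lemma --- over a generically Gorenstein ring, a finitely generated module of finite injective dimension is free of constant rank at every minimal prime --- is precisely the rank obstruction underlying the Leuschke--Wiegand (and Kato) proofs; you derive it from Sharp/Foxby duality, i.e.\ $Y\cong\omega\otimes_R\Hom_R(\omega,Y)$ with $\Hom_R(\omega,Y)$ of finite projective dimension, which is a legitimate external citation (see \cite[Section 9.6]{BH93}), and the test module $\Omega_R^{d}(R/\mathfrak p_1)$ together with the Euler-characteristic bookkeeping does force $\ell_{\mathfrak p_1}=1$ and the uniqueness of the minimal prime. Two small points worth tightening: (i) if $\Omega_R^{d}(R/\mathfrak p_1)$ happens to be zero or free, the $\SC_1$ hypothesis is not literally applicable to it, but the identity $\lambda_{\mathfrak p}(C)=c\,\ell_{\mathfrak p}$ then holds trivially, so the contradiction is unaffected; (ii) the step ``$M$ has codimension one, hence $M_{\mathfrak p}=0$ at every minimal prime'' uses that a Cohen--Macaulay local ring is equidimensional, which deserves a word.
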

\begin{proposition}\cite[Theorem 2.2]{YI00} 
Let $R$ be a normal Gorenstein complete local ring of dimension two. Then the following are equivalent.
\begin{enumerate}[label = (\arabic*)]
\item $R$ is a UFD. \newline
\item For any MCM $R$-module, there is an $R$-module $L$ of finite length (hence a Cohen-Macaulay $R$-module of codimension two) such that $M \overset{st}{\cong} \Omega_{R}^{2}(L)$. \newline
\item $R$ satisfies the $\SC_2$-condition.
\end{enumerate}
\end{proposition}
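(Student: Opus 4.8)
The plan is to prove the cycle $(1)\Rightarrow(2)\Rightarrow(3)\Rightarrow(1)$ using three different mechanisms: a duality argument for $(2)\Leftrightarrow(3)$, a first Chern class / divisor class group computation for $(3)\Rightarrow(1)$, and a Bourbaki sequence together with the hypothesis $\operatorname{Cl}(R)=0$ for $(1)\Rightarrow(2)$. Throughout I would use that a normal local ring is a domain satisfying Serre's conditions $(R_1)$ and $(S_2)$, that since $\dim R=2$ the MCM modules are exactly the reflexive modules while the codimension-two modules are exactly the modules of finite length, and that $\omega\cong R$ since $R$ is Gorenstein. First I would dispatch $(2)\Leftrightarrow(3)$ via the self-duality $(-)^{*}=\Hom_R(-,R)$ of the category of MCM modules. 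For a finite-length module $L$, set $L^{\vee}=\Ext_R^{2}(L,R)$, again of finite length, and recall from the Remark following Proposition \ref{prop:3.3} (that is, \cite[Proposition 11.15]{LW12}) that $X_L\cong\Hom_R(\Omega_R^{2}(L^{\vee}),R)$. Dualizing and using that $\Omega_R^{2}(L^{\vee})$ is reflexive gives $\Omega_R^{2}(L^{\vee})\overset{st}{\cong}(X_L)^{*}$. Since $L\mapsto L^{\vee}$ is an involution on finite-length modules and $(-)^{*}$ is an involution permuting the class of all MCM modules, the statement ``every MCM module is stably some $X_L$'' is equivalent to ``every MCM module is stably some $\Omega_R^{2}(L')$''; applying this to $M$ and to $M^{*}$ yields $(2)\Leftrightarrow(3)$.

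For $(3)\Rightarrow(1)$, I would use the first Chern class $c_1\colon\{\text{MCM modules}\}\to\operatorname{Cl}(R)$ sending a reflexive module to the class of its determinant: it is additive on short exact sequences, vanishes on modules supported in codimension $\ge 2$ and on modules of finite projective dimension, and is unchanged by adding free summands. In the minimal MCM approximation $0\to Y_L\to X_L\to L\to 0$ of a finite-length module $L$, the module $Y_L$ has finite injective dimension, hence finite projective dimension over the Gorenstein ring $R$, so $c_1(Y_L)=0$; as $c_1(L)=0$ as well, additivity gives $c_1(X_L)=0$. Assuming the $\SC_2$-condition, every MCM module is stably some $X_L$, so $c_1$ vanishes identically; applied to an arbitrary rank-one reflexive module (that is, a divisorial ideal) $I$ this gives $[I]=c_1(I)=0$, whence $I\cong R$. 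Thus $\operatorname{Cl}(R)=0$ and $R$ is a UFD. Note that Proposition \ref{prop:3.4} already forces $R_{\mathfrak p}$ regular for $\operatorname{ht}\mathfrak p\le 1$, but that is automatic from normality; the class-group computation is what supplies the extra content.

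The implication $(1)\Rightarrow(2)$ is the heart of the argument. Given an MCM module $N$, I would apply the construction to its cosyzygy $M=\Omega_R^{-1}(N)$, which is again MCM and satisfies $\Omega_R^{1}(M)\overset{st}{\cong}N$. If $\operatorname{rank}M\le 1$ then $M$ is free over the UFD and the claim is immediate, so assume $\rho=\operatorname{rank}M\ge 2$. A general Bourbaki sequence produces $0\to R^{\rho-1}\to M\xrightarrow{g}J\to 0$ with $J$ isomorphic to an ideal for which $R/J$ has finite length. Then $J^{**}$ is a rank-one reflexive module with $c_1(J^{**})=c_1(M)=0$, so the UFD hypothesis forces $J^{**}\cong R$; hence $J$ is an honest ideal with $R/J$ of finite length, i.e. $J=\Omega_R^{1}(R/J)$. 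Choosing a free cover $F_1\xrightarrow{h}J$, so that $\ker h=\Omega_R^{2}(R/J)$, and forming the pullback of $g$ and $h$, the sequence along the second projection splits off $R^{\rho-1}$ and leaves a free module, while the sequence along the first projection exhibits $\Omega_R^{2}(R/J)$ as a first syzygy of $M$. Therefore $N\overset{st}{\cong}\Omega_R^{1}(M)\overset{st}{\cong}\Omega_R^{2}(L)$ with $L=R/J$ of finite length.

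I expect the main obstacle to be exactly this last step: guaranteeing a Bourbaki sequence whose cokernel $J$ satisfies $R/J$ of finite length (so that its reflexive hull is rank one and can be trivialized using $\operatorname{Cl}(R)=0$), and then carefully bookkeeping the pullback/Schanuel manipulation that converts the relation $\Omega_R^{1}(M)\overset{st}{\cong}\Omega_R^{2}(R/J)$, a statement about the \emph{syzygy} of $M$, into the desired realization of $N$ \emph{itself} as a second syzygy of a finite-length module. The passage to the cosyzygy $\Omega_R^{-1}(N)$ is the device that makes the syzygy shift land on $N$ rather than on $\Omega_R^{1}(N)$, and checking that every comparison respects stable isomorphism (allowing free summands to be added and deleted throughout) is where the care is required.
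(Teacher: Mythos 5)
The paper does not actually prove this proposition; it is quoted verbatim from Yoshino--Isogawa \cite{YI00} as background, so there is no in-paper proof to compare against. Your argument is, as far as I can check, correct and complete, and it is essentially a reconstruction of the original one: the equivalence of (2) and (3) by dualizing $X_L\cong\Hom_R(\Omega_R^2(L^\vee),R)$ and using that $(-)^*=\Hom_R(-,R)$ and $L\mapsto L^\vee=\Ext_R^2(L,R)$ are involutions on MCM and finite-length modules respectively; $(3)\Rightarrow(1)$ by the divisor-class map; and $(1)\Rightarrow(2)$ by a Bourbaki sequence applied to the cosyzygy $\Omega_R^{-1}(N)$, which is exactly the device needed to make the syzygy shift land on $N$ itself. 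Note that the paper's own Lemma \ref{lemma:3.6} ($X_M\overset{st}{\cong}\Omega_R^{-t}(\Omega_R^t(M))$, with $t=2$) packages your duality step for $(2)\Leftrightarrow(3)$ in one line, since $\Omega_R^{\pm2}$ are mutually inverse permutations of the stable category of MCM modules; your unwinding via $\Hom_R(-,R)$ and $\Ext_R^2(-,R)$ is the same computation made explicit.

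Two points deserve more care in a final write-up. First, the additivity of $c_1$ on \emph{arbitrary} short exact sequences of finitely generated modules (not merely torsion-free ones) is the load-bearing input for $c_1(Y_L)=0$ via a finite free resolution and for $c_1(L)=0$; this is the divisor-class homomorphism on $G_0(R)$ of Bourbaki/Fossum \cite{Fossum73} and should be cited rather than asserted. Second, your sentence introducing the Bourbaki sequence claims it ``produces $0\to R^{\rho-1}\to M\to J\to 0$ with $J$ isomorphic to an ideal for which $R/J$ has finite length''; Bourbaki's theorem only gives $J$ torsion-free of rank one (equivalently, isomorphic to some ideal). The finite-length property of $R/J$ is not part of the theorem but is exactly what your next step supplies: $[J^{**}]=c_1(M)=0$ in $\operatorname{Cl}(R)$ forces $J^{**}\cong R$, and since $J\to J^{**}$ is an isomorphism in codimension one, $R/J$ is supported at $\mathfrak{m}$ and hence has finite length (with the degenerate case $J=R$ giving $M$ free, which folds into your rank $\leq 1$ case). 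You do carry this out, so the issue is one of phrasing rather than substance.
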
\text{}
\begin{theorem}\cite[Theorem 2.9]{KK07}\label{theorem:3.11} A Gorenstein complete local ring $R$ satisfies the $\SC_2$-condition if and only if $R$ is a UFD.
\end{theorem}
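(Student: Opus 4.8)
The plan is to prove both implications, working throughout in the stable category of $\CM(R)$ and translating the $\SC_2$-condition into a statement about the divisor class group. First I would record a reformulation. By Proposition \ref{prop:3.2} the $\SC_2$-condition for an MCM module $X$ means $X \overset{st}{\cong} X_M$ for some $M \in \CM^2(R)$, and by Lemma \ref{lemma:3.6} (with $r=t=2$) we have $X_M \overset{st}{\cong} \Omega_R^{-2}(\Omega_R^2(M))$. Since $\Omega_R^2$ and $\Omega_R^{-2}$ are mutually inverse bijections on stable isomorphism classes of MCM modules over the Gorenstein ring $R$, this shows that $R$ satisfies $\SC_2$ if and only if every MCM $R$-module is stably isomorphic to $\Omega_R^2(M)$ for some $M \in \CM^2(R)$; this is exactly condition (b) of the Yoshino--Isogawa theorem, now in arbitrary dimension. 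Note also that $\SC_2 \Rightarrow \SC_1$ (Proposition \ref{prop:3.3}), which gives that $R$ is a domain (Proposition \ref{prop:3.9}), while Proposition \ref{prop:3.4} with $r=2$ gives that $R_{\mathfrak p}$ is regular for every $\mathfrak p$ with $\Ht\mathfrak p\le 1$, i.e.\ $(R_1)$; as $R$ is Gorenstein it is $(S_2)$, so Serre's criterion makes $R$ a normal domain. Hence in both directions we may speak of $\mathrm{Cl}(R)$, and ``$R$ is a UFD'' becomes ``$\mathrm{Cl}(R)=0$''.

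For the forward implication I would use the first Chern class $c_{1}(-)$ (the determinant divisorial class), which is invariant under adding free summands, additive on short exact sequences of generically free modules, vanishes on modules of rank $0$ (codimension $\ge 1$), and satisfies $c_{1}(\Omega_R^{\pm 1}(N))=-c_{1}(N)$ on MCM modules. Assuming $\SC_2$, every MCM module $X$ is stably $\Omega_R^{-2}(\Omega_R^2(M))$ with $M\in\CM^2(R)$ of rank $0$; since $c_{1}(\Omega_R^2(M))=0$ and $c_{1}$ is unchanged under $\Omega_R^{-2}$, we get $c_{1}(X)=0$ for \emph{every} MCM module $X$. Now if $I$ is any divisorial (reflexive rank one) ideal, then $\Omega_R^{d}(I)$ is MCM with $c_{1}(\Omega_R^{d}(I))=(-1)^{d}[I]$; vanishing forces $[I]=0$. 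As divisorial ideals generate $\mathrm{Cl}(R)$, we conclude $\mathrm{Cl}(R)=0$, so $R$ is a UFD.

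For the converse, assume $R$ is a UFD (hence a normal domain with $\mathrm{Cl}(R)=0$) and let $Y$ be an arbitrary MCM module; by the reformulation it suffices to realize $Y \overset{st}{\cong} \Omega_R^2(D)$ for some $D\in\CM^2(R)$. The plan is to build a module $C$ whose reflexive hull $C^{**}$ is free, whose ``reflexive defect'' $D:=C^{**}/C$ is Cohen--Macaulay of codimension $2$, and with $\Omega_R^1(C)\overset{st}{\cong} Y$; then $\Omega_R^2(D)\overset{st}{\cong}\Omega_R^1(C)\overset{st}{\cong} Y$, as desired. Concretely one chooses an embedding $C\hookrightarrow C^{**}=F_0$ of reflexives that is an isomorphism in codimension $1$ and whose cokernel $D$ is $\CM$ of codimension $2$; this amounts to choosing a generic full-rank map whose determinant is trivial in codimension one, which is possible precisely because $\det C$ has trivial class in $\mathrm{Cl}(R)$. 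After a standard reduction to the case of infinite residue field via a flat local extension, a general-position (Bertini-type) argument, following the proof of \cite[Theorem 2.2]{YI00}, produces such a $C$.

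I expect the converse to be the main obstacle. A depth count shows the construction is genuinely delicate: in any presentation $0\to Y\to F_1\to F_0\to D\to 0$ with $D\in\CM^2(R)$, the intermediate syzygy $\Omega_R^1(D)$ has depth exactly $d-1$, so $D$ can never be obtained as the cokernel of a map \emph{from an MCM module} into a free module (such a cokernel has codimension $\le 1$). One is therefore forced to produce a non-reflexive $C$ with free reflexive hull and prescribed codimension-$2$ Cohen--Macaulay defect, and it is exactly the triviality of $\mathrm{Cl}(R)$ that removes the determinantal obstruction to doing so. Note also that one cannot simply cut down by a regular sequence of length $d-2$ to invoke the two-dimensional case, since a quotient of a UFD by a regular sequence need not be normal; this is why the higher-dimensional statement requires the class-group argument rather than a reduction to \cite[Theorem 2.2]{YI00}.
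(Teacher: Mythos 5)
The paper offers no internal proof to compare against here: Theorem \ref{theorem:3.11} is imported verbatim from \cite[Theorem 2.9]{KK07}, so your attempt must be judged on its own terms. Your reformulation is correct (via Proposition \ref{prop:3.2} and Lemma \ref{lemma:3.6}, $R$ satisfies the $\SC_2$-condition iff every MCM module is stably of the form $\Omega_R^2(M)$ with $M\in\CM^{2}(R)$), and the forward implication via divisor classes is essentially sound. One stated property of $c_1$ is wrong, however: the divisor class attached to a module vanishes on modules of codimension at least $2$, not codimension at least $1$ --- for a height-one prime $\mathfrak{p}$ one has $c_1(R/\mathfrak{p})=[\mathfrak{p}]$, which is generally nonzero. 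Taken at face value, your list of properties would let the identical argument run with $M\in\CM^{1}(R)$ and yield ``$\SC_1$ implies UFD,'' contradicting Proposition \ref{prop:3.9}. Since you only ever apply the vanishing to modules of codimension $2$, this is a repairable slip rather than a fatal error.

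The genuine gap is in the converse, which is where the real content of Kato's theorem lives. Everything there reduces to an existence statement --- for each MCM module $Y$, produce a torsion-free module $C$ with $\Omega_R^1(C)\overset{st}{\cong}Y$, with $C^{**}$ free and $C\neq C^{**}$ --- and you never construct it; you delegate it to ``a general-position (Bertini-type) argument, following the proof of \cite[Theorem 2.2]{YI00}.'' The one sentence you devote to the construction is internally inconsistent: you describe $C\hookrightarrow C^{**}$ as ``an embedding of reflexives'' when your own depth count shows $C$ must \emph{not} be reflexive, and you invoke the triviality of $\det C$ in the class group for a module that has not yet been built. What actually has to be proved is that $Y$ (of rank $r$, say) embeds into a free module of rank $r+1$ with torsion-free cokernel $C$ of rank \emph{one}; then $C^{**}$ is a divisorial ideal, the UFD hypothesis enters precisely to give $C^{**}\cong R$, hence $C\cong\mathfrak{a}$ for an ideal of grade at least $2$, and $D:=R/\mathfrak{a}$ works. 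Producing such an embedding amounts to choosing $r+1$ sufficiently general elements of $Y^{*}$ that generate it at every prime of height at most one and have a free rank-one syzygy; this needs a basic-element/prime-avoidance argument over infinitely many height-one primes together with a reduction to infinite residue field, and extending it from the two-dimensional, finite-length setting of \cite{YI00} to arbitrary dimension is exactly Kato's contribution --- it cannot simply be cited. Two smaller points: the Cohen--Macaulayness and exact codimension of $D$, which you list as constraints to be arranged, are in fact automatic from the depth lemma once $C$ is torsion-free with $\Depth C\ge d-1$ and $D$ is supported in codimension at least $2$; and your closing observation that one cannot reduce to dimension two by cutting with a regular sequence is correct and consistent with the paper, which must \emph{assume} $R/\mathbf{x}R$ is a UFD in Proposition \ref{prop:3.12} and Corollary \ref{corollary:3.16} for precisely this reason.
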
\text{}
\begin{proposition}\label{prop:3.12}
Let $R$ be a Gorenstein complete local ring of dimension $d \geq 3$\, and \,$3 \leq r \leq d$. Let $M$ be an MCM $R$-module and suppose $\Omega_{R}^{1}(M)$ satisfies the $\SC_{r-1}$-condition. Let $L \in \CM^{r-1}(R)$ such that $X_{L} \stackrel{st}\cong \Omega_{R}^1(M)$. If there is a regular sequence \,${\bf{x}} \in \Ann_{R}(L)$\, of length $r-2$ such that $R/{\bf{x}}R$ is a UFD, then $M$ satisfies the $\SC_{r}$-condition. 
\end{proposition}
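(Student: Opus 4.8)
The plan is to push everything down to the ring $S:=R/\mathbf{x}R$, use that $S$ satisfies the $\SC_{2}$-condition, and transport the resulting identity of syzygies back to $R$ by change of rings. Throughout I will use Lemma \ref{lemma:3.6} as the main bridge between MCM approximations and (co)syzygies, together with the facts that $\Omega_{R}^{1}$ and $\Omega_{R}^{-1}$ preserve stable isomorphism and satisfy $\Omega_{R}^{-1}\Omega_{R}^{1}\overset{st}{\cong}\mathrm{id}$ on MCM modules (the latter is Lemma \ref{lemma:3.6} applied to the MCM module $M$ with $t=1$, since $X_{M}=M$). First I record the two assembly identities. Since $\Omega_{R}^{1}(M)\overset{st}{\cong}X_{L}$ and $X_{L}\overset{st}{\cong}\Omega_{R}^{-(r-1)}(\Omega_{R}^{r-1}(L))$ (Lemma \ref{lemma:3.6} for $L$, which has codepth $r-1$, with $t=r-1$), I obtain
\[
M \overset{st}{\cong} \Omega_{R}^{-1}(X_{L}) \overset{st}{\cong} \Omega_{R}^{-r}\bigl(\Omega_{R}^{r-1}(L)\bigr),
\]
while for any $C\in\CM^{r}(R)$, Lemma \ref{lemma:3.6} with $t=r$ gives $X_{C}\overset{st}{\cong}\Omega_{R}^{-r}(\Omega_{R}^{r}(C))$. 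Hence it suffices to produce a single $C\in\CM^{r}(R)$ with $\Omega_{R}^{r}(C)\overset{st}{\cong}\Omega_{R}^{r-1}(L)$.

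Next I construct $C$. Because $\mathbf{x}\in\Ann_{R}(L)$ is $R$-regular of length $r-2$, the ring $S=R/\mathbf{x}R$ is a Gorenstein complete local ring of dimension $d-r+2\geq2$, and it is a UFD by hypothesis, so $S$ satisfies the $\SC_{2}$-condition by Theorem \ref{theorem:3.11}. The module $L$ is an $S$-module, and over $S$ its codimension drops by $r-2$, so $L\in\CM^{1}(S)$ and $N:=\Omega_{S}^{1}(L)$ is an MCM $S$-module. Applying the $\SC_{2}$-condition over $S$ to the MCM module $D_{S}(N)$, where $D_{S}=\Hom_{S}(-,S)$, yields $C_{0}\in\CM^{2}(S)$ with $D_{S}(N)\overset{st}{\cong}X_{C_{0}}^{S}$ (the MCM approximation taken over $S$). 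By the formula in the Remark following Proposition \ref{prop:3.3}, applied over $S$, one has $X_{C_{0}}^{S}\cong D_{S}(\Omega_{S}^{2}(C_{0}^{\vee}))$ with $C_{0}^{\vee}=\Ext_{S}^{2}(C_{0},S)\in\CM^{2}(S)$; dualizing and using that $D_{S}$ is a duality on MCM $S$-modules gives, with $C:=C_{0}^{\vee}$, the \emph{syzygy} identity $\Omega_{S}^{2}(C)\overset{st}{\cong}N=\Omega_{S}^{1}(L)$ over $S$. Finally, viewed as an $R$-module, $C$ lies in $\CM^{r}(R)$, since its codimension over $R$ is $2+(r-2)=r$ and its depth is unchanged.

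To transport this to $R$ I use Lemma \ref{lemma:3.7} as a change-of-rings device. Iterating it gives $\Omega_{R}^{r-1}(L)\overset{st}{\cong}\Omega_{R}^{r-2}(\Omega_{S}^{1}(L))$ (one application, $n=r-2$) and $\Omega_{R}^{r}(C)\overset{st}{\cong}\Omega_{R}^{r-2}(\Omega_{S}^{2}(C))$ (two applications). The Koszul complex on $\mathbf{x}$ shows $\Omega_{R}^{r-2}(S)\cong R$, so applying the additive operator $\Omega_{R}^{r-2}$ to the $S$-stable isomorphism $\Omega_{S}^{2}(C)\overset{st}{\cong}\Omega_{S}^{1}(L)$ converts free $S$-summands into free $R$-summands and produces an $R$-stable isomorphism $\Omega_{R}^{r-2}(\Omega_{S}^{2}(C))\overset{st}{\cong}\Omega_{R}^{r-2}(\Omega_{S}^{1}(L))$, that is, $\Omega_{R}^{r}(C)\overset{st}{\cong}\Omega_{R}^{r-1}(L)$. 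Combining with the assembly identities yields
\[
X_{C}\overset{st}{\cong}\Omega_{R}^{-r}(\Omega_{R}^{r}(C))\overset{st}{\cong}\Omega_{R}^{-r}(\Omega_{R}^{r-1}(L))\overset{st}{\cong}M,
\]
so $M$ satisfies the $\SC_{r}$-condition, witnessed by $C\in\CM^{r}(R)$.

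I expect the crux to be the middle step: the $\SC_{2}$-condition over $S$ a priori only produces an MCM approximation $X_{C_{0}}^{S}$, which by Lemma \ref{lemma:3.6} is a \emph{second cosyzygy}, whereas the change-of-rings bookkeeping requires an honest second syzygy $\Omega_{S}^{2}(C)\overset{st}{\cong}N$. Converting the cosyzygy into a syzygy is exactly what the $S$-duality $D_{S}$ and the identity $X_{C_{0}}^{S}\cong D_{S}(\Omega_{S}^{2}(C_{0}^{\vee}))$ accomplish. The remaining care is purely in verifying that every stable isomorphism survives the operators $D_{S}$, $\Omega_{S}^{2}$, $\Omega_{R}^{-r}$, and especially $\Omega_{R}^{r-2}$, the last of which is legitimate precisely because $\Omega_{R}^{r-2}$ sends free $S$-modules to free $R$-modules.
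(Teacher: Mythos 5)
Your proof is correct, and its core step is genuinely different from the paper's. Both arguments share the same skeleton: reduce (via Lemma \ref{lemma:3.6} and the hypothesis $X_L \overset{st}{\cong} \Omega_R^1(M)$) to producing a module $C \in \CM^2(S)$, $S = R/\mathbf{x}R$, with $\Omega_S^2(C) \overset{st}{\cong} \Omega_S^1(L)$, and then transport back to $R$ with Lemma \ref{lemma:3.7}. Where you diverge is in how that $C$ is produced. The paper inlines the Yoshino--Isogawa argument: it uses the UFD hypothesis concretely (height-one primes of $S$ are principal, so the localization $\Gamma^{-1}S$ at the complement of the associated primes of $L$ is a PID), decomposes $\Gamma^{-1}L$ by the structure theorem, descends to an embedding $L \hookrightarrow \bigoplus_k S/a_kS$, and takes $L'$ to be the cokernel; most of the paper's proof is then spent verifying that the $a_k$ can be chosen in $S$, that $f$ is injective, and that $L'$ is Cohen--Macaulay of the right codimension. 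You instead invoke Theorem \ref{theorem:3.11} as a black box to get the $\SC_2$-condition for $S$, apply it to $D_S(\Omega_S^1(L))$, and use the duality formula $X_{C_0} \cong D_S(\Omega_S^2(C_0^{\vee}))$ from the Remark after Proposition \ref{prop:3.3} to convert the resulting approximation statement into the required syzygy statement. This is shorter and cleanly isolates exactly where the UFD hypothesis enters, at the cost of being less self-contained and less explicit (the paper's $L'$ is a concrete cokernel, which is sometimes useful). Two points in your write-up deserve credit: you correctly identify the cosyzygy-versus-syzygy mismatch as the crux and resolve it with $D_S$, and you justify the step the paper leaves implicit, namely that applying $\Omega_R^{r-2}$ to an $S$-stable isomorphism yields an $R$-stable isomorphism because the Koszul complex gives $\Omega_R^{r-2}(S) \cong R$. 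Note that your argument inherits the paper's reliance on Lemma \ref{lemma:3.7} applied to the full length-$(r-2)$ sequence $\mathbf{x}$, so it is neither more nor less robust on that front.
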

\begin{proof}
We first prove that $\Omega_{R}^{r-1}(L) \overset{st}{\cong} \Omega_{R}^{r}(M)$. Taking minimal projective resolutions of $L$ and $Y_L$, we apply the horseshoe lemma to the minimal MCM approximation of $L$ and obtain the following diagram with exact rows, and columns that are truncated projective resolutions.
\begin{equation}\label{diagram:3.1}
\begin{tikzcd}
&0 \rar &\Omega_{R}^{r-1}(Y_L) \dar \rar &Z_{r-1} \rar \dar &\Omega_{R}^{r-1}(L) \dar \rar &0\\
&0 \rar &P_{r-1}' \rar \dar &P_{r-1} \rar \dar &P_{r-1}'' \rar \dar{} &0\\
&&\vdots \dar &\vdots \dar &\vdots \dar &\\
&0 \rar &P_0' \rar \dar &P_0 \rar \dar &P_0'' \dar \rar &0\\
&0 \rar &Y_{L} \dar \rar &X_{L} \dar \rar &L \dar \rar &0\\
&& 0 & 0 & 0 &&
\end{tikzcd}
\end{equation}
By the depth lemma, we have depth$_{R}(Y_L) \geq \text{min}\{\text{depth}_{R}(X_{L}), \text{depth}_{R}(L)+1\}$. Since $X_L$ is MCM \,and $\text{depth}_{R}(L)=d-(r-1)$, we have $\text{depth}_{R}(Y_L) > d-(r-1)$. By successively applying the depth lemma, it follows that $\Omega_{R}^{r-1}(Y_L)$ is an MCM $R$-module  of finite projective dimension. Therefore, $\Omega_{R}^{r-1}(Y_L)$ is a free $R$-module by \cite[Proposition 11.7]{LW12}. Likewise, $\Omega_{R}^{r-1}(L)$ is an MCM $R$-module. Applying $\text{Hom}_{R}(\Omega_{R}^{r-1}(L),-)$ to the top row of diagram \ref{diagram:3.1}, we obtain the following exact sequence.
\[
\text{Hom}_{R}(\Omega_{R}^{r-1}(L)\,,\,Z_{r-1}) \longrightarrow \text{Hom}_{R}(\Omega_{R}^{r-1}(L)\,,\,\Omega_{R}^{r-1}(L)) \longrightarrow \text{Ext}_{R}^1(\Omega_{R}^{r-1}(L)\,,\,\Omega_{R}^{r-1}(Y_{L}))\] Since $\Omega_{R}^{r-1}(L)$ is MCM and $\Omega_{R}^{r-1}(Y_{L})$ has finite projective dimension, we have 
\[\text{Ext}_{R}^1(\Omega_{R}^{r-1}(L)\,,\,\Omega_{R}^{r-1}(Y_{L}))=0\] by \cite[Proposition 11.3]{LW12}. Therefore, the top row of the diagram \ref{diagram:3.1} splits, and we have $\Omega_{R}^{r-1}(X_{L}) \overset{st}{\cong} Z_{r-1} \overset{st}{\cong} \Omega_{R}^{r-1}(L).$ Since $X_{L} \overset{st}{\cong}\Omega_{R}^{1}(M)$, it follows that $\Omega_{R}^{r-1}(L) \overset{st}{\cong} \Omega_{R}^{r}(M)$. \newline

Let $S:=R/{\bf{x}}R$. Then $L \in $ CM$^{1}(S)$. We denote the associated primes of $L$ by $\text{Ass}(L)=\{\mathfrak{p}_{1},..., \mathfrak{p}_{m}\}$ and we let \,$\Gamma:=S-\bigcup\limits_{i=1}^{m} \mathfrak{p}_{i}$. Since $L$ is a Cohen-Macaulay $S$-module of codimension 1, we have \,$\text{ht\,}\mathfrak{p}_{i}=1$ for each $i$ by \cite[Theorems 17.3 and 17.4]{M89}. Since $S$ is a UFD,\, $\mathfrak{p}_{i}$ is a principal ideal for each $i$ \cite[Lemma 2.2.17]{BH93}. Write\, $\mathfrak{p}_{i}=(p_i)$, where $p_i \in S$. Let $\mathfrak{q}$ be a nonzero prime ideal of $\Gamma^{-1}S$ and let $h:S \longrightarrow \Gamma^{-1}S$ be the localization map. Then\, $h^{-1}(\mathfrak{q}) \subseteq \bigcup\limits_{i=1}^{m} (p_{i})$ is a nonzero prime ideal, and by prime avoidance, $h^{-1}(\mathfrak{q}) \subseteq (p_{i})$ for some $(p_i)$.
Therefore, $h^{-1}(\mathfrak{q})=(p_{i})$ and $\mathfrak{q}=p_{i}\Gamma^{-1}S$. We conclude that every prime ideal of $\Gamma^{-1}S$ is principal. Therefore, $\Gamma^{-1}S$ is a PID. Since \,$\Gamma^{-1}L$ is a finitely generated \,$\Gamma^{-1}S$-module, there are elements $a_1,...,a_t \in \Gamma^{-1}S$ such that $\Gamma^{-1}L \cong \bigoplus\limits_{k=1}^{t} \Gamma^{-1}S/a_{k}\Gamma^{-1}S$ \,as \,$\Gamma^{-1}S$-modules. Let \,$\phi: \Gamma^{-1}L \longrightarrow \bigoplus\limits_{k=1}^{t} \Gamma^{-1}S/a_{k}\Gamma^{-1}S$\, be the corresponding isomorphism. Since $L \in \text{CM}^1(S)$,\, it follows that $L$ is a torsion $S$-\text{module, and } $\Gamma^{-1}L$\, is a torsion \,$\Gamma^{-1}S$-module. Therefore, $a_{k} \neq 0$ for all $k$. \newline

Fix $k$. We claim that each associated prime ideal of $I=h^{-1}(a_k\Gamma^{-1}S)$ \,has height one. Since $I \neq 0$, the associated primes of $I$ have height at least one. Suppose $I$ has an associated prime $\mathfrak{q}$ of height greater than one.  Then $\Gamma \cap \mathfrak{q} \neq \emptyset$. Let $s \in \Gamma \cap \mathfrak{q}$. Since $\mathfrak{q}$ is an associated prime of $I$, there exists an element $x \in S \setminus I$ such that $\mathfrak{q}=\Ann_{S}(\overline{x})$, with $\overline{x} \in S/I$. Therefore, $\mathfrak{q}x \subseteq I$ and $sx \in I$. It follows that $\frac{sx}{1} \in a_{k}\Gamma^{-1}S$,\, so\, $\frac{x}{1} \in a_{k}\Gamma^{-1}S$ \,and \,$x \in I$, which is a contradiction. We conclude that every associated prime of $I$ has height one, and is therefore principal.
Let $\text{Ass}(I)=\{(q_1),...,(q_m)\}$. Taking an irredundant primary decomposition of $I$, we have \[I=Q_{1} \cap Q_{2} \cap ... \cap Q_{v},\] where for each $l$, $\sqrt{Q_l}=(q_l)$. We claim that each $Q_l$ is a principal ideal. Fix $1 \leq l \leq v$. We have $Q_l \subseteq (q_l)$. Since $\bigcap\limits_{\alpha \geq 0} (q_{l}^\alpha)=0$, there is a positive integer $\alpha$ such that $Q_l \subseteq (q_{l}^\alpha)$ and $Q_l \not\subseteq (q_{l}^{\alpha+1})$. Since $S$ is Noetherian, $Q_l$ is finitely-generated. Write $Q_l=(y_1,...,y_r)$, where $y_1,...,y_r \in S$. Since $Q_l \subseteq (q_{l}^\alpha)$, there are elements $c_1,...,c_r$ in $S$ such that $y_j=c_jq_l^{\alpha}$ for $j=1,...,r$. So\, $Q_l=(c_1q_{l}^{\alpha},...,c_rq_{l}^{\alpha})$. Since $Q_{l} \not\subseteq (q_{l}^{\alpha+1})$, there is an index $j$ such that $c_j \not\in (q_l)$.
Since $c_jq_{l}^\alpha \in Q_l$ and $Q_l$ is a primary ideal, $q_{l}^\alpha \in Q_l$ or $c_{j}^{\beta} \in Q_l$ for some $\beta>0$. If $c_{j}^{\beta} \in Q_l$ for some $\beta>0$, then $c_j \in \sqrt{Q_l}=(q_l)$, which is false. Therefore, $q_{l}^\alpha \in Q_l \subseteq (q_l^{\alpha})$, whence $Q_l=(q_{l}^\alpha)$. We conclude that $Q_l$ is a principal ideal for $l=1,...,v$. Therefore, $I$ is an intersection of principal ideals, and since $S$ is a UFD, $I$ is also a principal ideal. \newline

Let $b_k \in S$ such that $I=(b_k)$. Then \,$a_k\Gamma^{-1}S=b_k\Gamma^{-1}S$. We claim that Ass$(b_k)=\text{Ass}(I) \subseteq$ Ass$(L)=\{(p_1),...,(p_m)\}$. Suppose $\mathfrak{q} \in \text{Ass}(I)-\text{Ass}(L)$. Since $\mathfrak{q}$ has height 1, $\mathfrak{q} \cap \Gamma \neq \emptyset$. Let $s \in \mathfrak{q} \cap \Gamma $, and let $x \in S-I$ such that $\mathfrak{q}=\Ann_{S}(\overline{x})$, with $\overline{x} \in S/I$. Then $sx \in I$, so $\frac{sx}{1}\in a_k\Gamma^{-1}S$ and $x \in I$, a contradiction. We conclude that Ass$(b_k) \subseteq$ Ass$(L)$.
We may therefore assume that $a_k \in S$ and that every associated prime of $a_k$ is an associated prime of $L$. Since there is an isomorphism of $\Gamma^{-1}S-$modules
\[\Gamma^{-1}\text{Hom}_{S}(L,\bigoplus\limits_{k=1}^{t}S/a_{k}S) \cong \text{Hom}_{\Gamma^{-1}S}(\Gamma^{-1}L,\bigoplus\limits_{k=1}^{t} \Gamma^{-1}S/a_{k}\Gamma^{-1}S),\]
there is an $S$-map $f:L \longrightarrow \bigoplus\limits_{k=1}^{t}S/a_{k}S$ \,such that $\Gamma^{-1}f=\phi$. We claim that $f$ is a monomorphism. Suppose not. Then $\ker f \neq 0$, and Ass$(\ker f) \neq \emptyset$. Let $\mathfrak{p} \in$ \text{Ass}$(\ker f)$. Since Ass$(\ker f) \subseteq$ Supp$(\ker f)$, we have $(\ker f)_{\mathfrak{p}} \neq 0$. Also, since $\ker f \subseteq L$, we have $\mathfrak{p} \in$ Ass\,$L=\{\mathfrak{p}_{1},...,\mathfrak{p}_{m}\}$. Let $T=S-\mathfrak{p}$ and let $T'$ be the image of $T$ in $\Gamma^{-1}S$. Since $\Gamma=S-\bigcup\limits_{i=1}^{m}\mathfrak{p}_{i}$, we have $\Gamma \subseteq T$. Starting with our \,$\Gamma^{-1}S$-isomorphism $\Gamma^{-1}f: \Gamma^{-1}L \longrightarrow \bigoplus\limits_{k=1}^{t} \Gamma^{-1}S/a_{k}\Gamma^{-1}S$, we localize at $T'$, obtaining the $T^{-1}S$-isomorphism \[T^{-1}f:T^{-1}L \longrightarrow \bigoplus\limits_{k=1}^{t} T^{-1}S/a_{k}T^{-1}S.\] But this map is just
$f_{\mathfrak{p}}: L_{\mathfrak{p}} \rightarrow \bigoplus\limits_{k=1}^{t} S_{\mathfrak{p}}/a_{k}S_{\mathfrak{p}}$. Therefore, we have $0=\ker (f_{\mathfrak{p}})=(\ker f)_{\mathfrak{p}} \neq 0$, a contradiction.
Therefore, $\ker f=0$ and we have an exact sequence of $S$-modules
\begin{equation}\label{sequence:3.2} 0 \longrightarrow L \xlongrightarrow{f} \bigoplus\limits_{k=1}^{t} S/a_{k}S  \longrightarrow L' \longrightarrow 0.
\end{equation}
Taking minimal projective resolutions of $L$ and $L'$, the Horseshoe Lemma gives us the following diagram with exact rows, and columns that are truncated projective resolutions.\[\xymatrix{
0 \ar[r] & \Omega_{S}^{1}(L) \ar[r] \ar[d] & Z_1 \ar[r] \ar[d] & \Omega_{S}^{1}(L') \ar[r] \ar[d] & 0 \\
0 \ar[r] & P_{0} \ar[r] \ar[d] & P_{0} \bigoplus P_{0}' \ar[r] \ar[d] & P_{0}' \ar[r] \ar[d] & 0 \\
0 \ar[r] & L \ar[r] \ar[d] & \bigoplus\limits_{k=1}^{t} S/a_{k}S \ar[r] \ar[d] & L' \ar[r] \ar[d] & 0 \\
  & 0 & 0 & 0} 
\]
Since $\text{pd}_{S}S/a_{k}S=1$ for each $k$, it follows that $Z_1$ is a free $S$-module. Therefore, $\Omega_{S}^2(L') \overset{st}{\cong} \Omega_{S}^1(L).$ Since $\Omega_{R}^{r-1}(L) \overset{st}{\cong} \Omega_{R}^{r}(M)$, by Lemma \ref{lemma:3.7}, we have 
\[\Omega_{R}^{r-2}(\Omega_{S}^2(L')) \overset{st}{\cong} \Omega_{R}^{r-2}(\Omega_{S}^{1}(L))\overset{st}{\cong} \Omega_{R}^{r-1}(L) \overset{st}{\cong} \Omega_{R}^{r}(M).\] On the other hand, \[\Omega_{R}^{r-2}(\Omega_{S}^2(L')) \overset{st}{\cong} \Omega_{R}^{r-1}(\Omega_{S}^{1}(L')) \overset{st}{\cong} \Omega_{R}^{r}(L').\] 
Therefore, $\Omega_{R}^{r}(L') \overset{st}{\cong} \Omega_{R}^{r}(M)$. We claim that $L'$ is Cohen-Macaulay. We have \[\text{Supp}(L') \subseteq \text{Supp}(\bigoplus\limits_{k=1}^{t} S/a_{k}S) \subseteq \bigcup\limits_{k=1}^{t} \text{Supp}(S/a_kS).\] 
Applying the depth lemma to sequence \ref{sequence:3.2}, we have $\text{depth}_{S}(L') \geq \text{depth}(S)-2=d-r$. Since \[\Dim_{S} (\bigoplus\limits_{k=1}^{t} S/a_{k}S)= \Dim(S)-1=d-r+1,\] we have \,$\text{dim}_{S}(L') \leq d-r+1$.
Suppose \,$\text{dim}_{S}(L') = d-r+1$. Then there is a chain of prime ideals $\mathfrak{q}_{0} \subsetneq \mathfrak{q}_{1} \subsetneq \cdot\cdot\cdot \subsetneq \mathfrak{q}_{d-r+1}$ in Supp$(L')$. Since $\mathfrak{\mathfrak{q}}_{0} \in \text{Supp}(L')$, there exists an index $k$ such that $\mathfrak{\mathfrak{q}}_{0} \in  \text{Supp}(S/a_kS)$. Therefore, $\mathfrak{q}_{0} \subsetneq \mathfrak{q}_{1} \subsetneq \cdot\cdot\cdot \subsetneq \mathfrak{q}_{d-r+1}$ is a chain of prime ideals in Supp$(S/a_kS)$. Since the Krull dimension of Supp($S/a_kS)$ is $d-r+1$, it follows that $\mathfrak{q}_{0}$ is a minimal prime in Supp$(S/a_kS)$. Therefore, $\mathfrak{q}_{0}=\mathfrak{p}_{i} \in \Ass(L)$ for some $1 \leq i \leq m$. Since $f_{\mathfrak{p}_{i}}: L_{\mathfrak{p}_{i}} \longrightarrow \bigoplus\limits_{k=1}^{t} S_{\mathfrak{p}_{i}}/a_{k}S_{\mathfrak{p}_{i}}$ is an isomorphism, we have $0 \neq L'_{\mathfrak{q}_{0}}=L'_{\mathfrak{p}_{i}}=0$, a contradiction. We conclude that dim$_{S}(L')=d-r$ and \[L' \in \CM^{2}(S) \subseteq \,\CM^{r}(R).\] Since $\Omega_{R}^{r}(L') \overset{st}{\cong} \Omega_{R}^{r}(M)$, by Lemma \ref{lemma:3.6}, we have 
\[X_{L'} \overset{st}{\cong} \Omega_{R}^{-r}(\Omega_{R}^{r}(L')) \overset{st}{\cong} \Omega_{R}^{-r}(\Omega_{R}^{r}(M)) \overset{st}{\cong} M.\].
\end{proof}\text{}
Let $\text{Spec}(R)$ denote the set of prime ideals of $R$ and let \,$U_{R}:=\text{Spec}(R) \setminus \{\mathfrak{m}\}$ denote the punctured spectrum of $R$. Let \,$\text{Pic}(U_{R})$ denote the Picard group of $U_{R}$ \cite[Chapter 5]{Fossum73}.\newline
\begin{definition}\cite[Chapter 5]{Fossum73}
A Noetherian local ring $R$ is {\it{parafactorial}} if $\text{depth}(R) \geq 2$ and $\text{Pic}(U_{R})=0$.
\end{definition} 
\text{}
\begin{remark}
This weaker notion of factoriality gives us the following criterion for determining when a ring is a UFD. We use this criterion and  Kato's result on regular localizations of rings satisfying the SC$_r$-condition (Proposition \ref{prop:3.4}) to study the relation between the SC$_r$-condition and UFDs obtained by factoring out a regular sequence. 
\end{remark}\text{}
\begin{proposition}\cite[Corollary 18.11]{Fossum73}\label{prop:3.15}
Suppose $R$ is a Noetherian local ring such that $\Dim(R) \geq 2$. Then $R$ is a UFD if and only if $R$ is parafactorial and $R_{\mathfrak{p}}$ is a UFD for all $\mathfrak{p} \in U_{R}$.
\end{proposition}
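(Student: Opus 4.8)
The plan is to recast factoriality as the vanishing of the divisor class group $\mathrm{Cl}(R)$ and to compare this class group with $\Pic(U_R)$, exploiting that the removed closed point $\mathfrak{m}$ has codimension $\Ht\mathfrak{m} = \Dim(R) \geq 2$. For the forward implication, suppose $R$ is a UFD. Then $R$ is an integrally closed domain, so by Serre's criterion it satisfies $(S_2)$, and since $\Dim(R) \geq 2$ this forces $\Depth(R) \geq 2$. Localizations of a UFD are again UFDs, so $R_{\mathfrak{p}}$ is a UFD for every $\mathfrak{p} \in U_R$; in particular $U_R$ is locally factorial, every Weil divisor on it is Cartier, and the natural map gives $\Pic(U_R) \cong \mathrm{Cl}(U_R)$. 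Because every height-one prime of $R$ lies in $U_R$ (as $\mathfrak{m}$ has height $\geq 2$), restriction identifies $\mathrm{Cl}(R)$ with $\mathrm{Cl}(U_R)$, and $\mathrm{Cl}(R)=0$ since $R$ is factorial. Thus $\Pic(U_R)=0$ and $R$ is parafactorial, while the UFD property of each $R_{\mathfrak{p}}$ was already noted.

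For the converse, I would assume $R$ is parafactorial with $R_{\mathfrak{p}}$ a UFD for all $\mathfrak{p} \in U_R$, and first establish that $R$ is a normal domain via Serre's criterion $(R_1)+(S_2)$. For $(R_1)$: every prime of height $\leq 1$ lies in $U_R$, and a Noetherian local UFD of dimension $\leq 1$ is a field or a DVR, hence regular, so $R_{\mathfrak{p}}$ is regular whenever $\Ht\mathfrak{p}\leq 1$. For $(S_2)$: each $R_{\mathfrak{p}}$ with $\mathfrak{p}\neq\mathfrak{m}$ is a UFD, hence normal, hence $(S_2)$, while the definition of parafactorial supplies $\Depth(R)\geq 2 = \min(2,\Dim R)$ at the closed point. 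Since $R$ satisfies $(R_1)$ and $(S_2)$ it is normal, and being local it is a normal domain.

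It then remains to deduce $\mathrm{Cl}(R)=0$ from $\Pic(U_R)=0$. Exactly as in the forward direction, local factoriality of $U_R$ yields $\Pic(U_R)\cong\mathrm{Cl}(U_R)$, and the codimension-$\geq 2$ inclusion of $\mathfrak{m}$ yields $\mathrm{Cl}(R)\cong\mathrm{Cl}(U_R)$ \cite{Fossum73}; combining these with $\Pic(U_R)=0$ gives $\mathrm{Cl}(R)=0$, so the normal domain $R$ is a UFD. I expect the main obstacle to be the converse: assembling \emph{global} normality of $R$ from purely local factoriality on $U_R$ together with the depth hypothesis, and then invoking Fossum's divisor-theoretic machinery to justify the two identifications $\mathrm{Cl}(R)\cong\mathrm{Cl}(U_R)\cong\Pic(U_R)$ at the required level of rigor. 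The forward direction is comparatively routine once one uses that localizations of UFDs are UFDs and that normal rings satisfy $(S_2)$.
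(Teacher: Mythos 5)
The paper offers no proof of this proposition—it is imported verbatim as Corollary 18.11 of Fossum, whose argument runs through exactly the divisor-class-group machinery you use, namely the excision isomorphism $\mathrm{Cl}(R)\cong\mathrm{Cl}(U_R)$ across the closed point of codimension $\Dim(R)\geq 2$ together with the identification $\mathrm{Cl}(U_R)\cong\Pic(U_R)$ on the locally factorial (integral, Noetherian, separated) punctured spectrum. Your argument is correct and matches that route, including the one step that genuinely needs care in the converse: establishing that $R$ is a normal domain via Serre's criterion before any class-group comparison is available, with $(R_1)$ and $(S_2)$ away from $\mathfrak{m}$ coming from the local UFD hypothesis on $U_R$ and the depth bound at $\mathfrak{m}$ coming from parafactoriality.
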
\text{}
\begin{corollary}\label{corollary:3.16} Let $(R,\mathfrak{m})$ be a Gorenstein complete local ring of dimension $d \geq 3$. The following are equivalent.\newline
\begin{enumerate}[label = (\arabic*)]
 \item $R$ satisfies the $\SC_{d-1}$-condition and for each $M \in \CM(R$), there is a module $L \in \CM^{d-1}(R)$ such that $X_{L} \stackrel{\text{st}}\cong \Omega_{R}^1(M)$ and $\Ann_{R}(L)$ contains a regular sequence $\bold{x}$ of length $d-2$ such that $R/\bold{x}R$ is a UFD.\newline\newline
\item $R$ satisfies the $\SC_{d}$-condition and for each $M \in \CM(R$), there is a module $L \in \CM^{d-1}(R)$ such that $X_{L} \stackrel{\text{st}}\cong \Omega_{R}^1(M)$ and $\Ann_{R}(L)$ contains a regular sequence $\bf{x}$ of length $d-2$ that satisfies the following.\newline
\begin{itemize}
\item[(i)] $\bf{x}$ is a subset of a regular system of parameters for \,$R_{\mathfrak{p}}$\, for all prime ideals $\mathfrak{p} \in U_{R}$ containing $\bf{x}$\newline
 \item[(ii)] $\Pic(U_{R/{\bf{x}}R})=0$
 \end{itemize}
\end{enumerate}
\end{corollary}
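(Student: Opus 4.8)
The plan is to exploit that conditions (1) and (2) share the same skeleton---for each $M \in \CM(R)$ there exist $L \in \CM^{d-1}(R)$ and a regular sequence $\mathbf{x} \in \Ann_R(L)$ of length $d-2$ with $X_L \overset{st}{\cong} \Omega_R^1(M)$---and differ only in two places: the subscript of the $\SC$-condition ($d-1$ versus $d$) and the hypothesis imposed on $\mathbf{x}$ (that $R/\mathbf{x}R$ be a UFD, versus the geometric conditions (i) and (ii)). The first discrepancy I would bridge by Proposition \ref{prop:3.3} and Proposition \ref{prop:3.12}, and the second by Fossum's criterion, Proposition \ref{prop:3.15}.

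The heart of the argument is an observation I would isolate first. Suppose $R$ satisfies the $\SC_d$-condition; then by Proposition \ref{prop:3.4} the localization $R_{\mathfrak{p}}$ is regular for every $\mathfrak{p} \in U_R$, since such a prime has height at most $d-1<d$. Fix a regular sequence $\mathbf{x}$ of length $d-2$. I claim that $R/\mathbf{x}R$ is a UFD if and only if (i) and (ii) hold. Note that $R/\mathbf{x}R$ is Gorenstein of dimension $2$, so $\Depth(R/\mathbf{x}R)=2$ and the depth half of parafactoriality is automatic; hence by Proposition \ref{prop:3.15}, $R/\mathbf{x}R$ is a UFD exactly when $\Pic(U_{R/\mathbf{x}R})=0$ and $(R/\mathbf{x}R)_{\mathfrak{q}}$ is a UFD for every $\mathfrak{q}\in U_{R/\mathbf{x}R}$. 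Writing $\mathfrak{q}=\mathfrak{p}/\mathbf{x}R$ with $\mathfrak{p}\in U_R$ containing $\mathbf{x}$, we have $(R/\mathbf{x}R)_{\mathfrak{q}}=R_{\mathfrak{p}}/\mathbf{x}R_{\mathfrak{p}}$ with $R_{\mathfrak{p}}$ regular. Condition (ii) is exactly the vanishing of $\Pic(U_{R/\mathbf{x}R})$, while condition (i) says that $\mathbf{x}$ extends to a regular system of parameters of $R_{\mathfrak{p}}$, equivalently that $R_{\mathfrak{p}}/\mathbf{x}R_{\mathfrak{p}}$ is regular. Since a regular local ring is a UFD, (i) forces each $(R/\mathbf{x}R)_{\mathfrak{q}}$ to be a UFD; conversely, if $R/\mathbf{x}R$ is a UFD then each $R_{\mathfrak{p}}/\mathbf{x}R_{\mathfrak{p}}$ is a UFD of dimension $0$ or $1$, hence a field or a DVR, hence regular, so (i) holds. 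This would prove the claim.

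With the claim available, the implication (2) $\Rightarrow$ (1) is immediate: $\SC_d$ implies $\SC_{d-1}$ by Proposition \ref{prop:3.3}, and for each $M$ the claim turns conditions (i) and (ii) on the supplied $L$ and $\mathbf{x}$ into the statement that $R/\mathbf{x}R$ is a UFD. For (1) $\Rightarrow$ (2) I would first upgrade $\SC_{d-1}$ to $\SC_d$. Given $M\in\CM(R)$, its syzygy $\Omega_R^1(M)$ is again MCM, so it satisfies the $\SC_{d-1}$-condition because $R$ does; the hypothesis of (1) provides $L\in\CM^{d-1}(R)$ with $X_L\overset{st}{\cong}\Omega_R^1(M)$ and a regular sequence $\mathbf{x}\in\Ann_R(L)$ of length $d-2$ with $R/\mathbf{x}R$ a UFD, so Proposition \ref{prop:3.12} with $r=d$ shows $M$ satisfies $\SC_d$. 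As $M$ was arbitrary, $R$ satisfies $\SC_d$, and the claim then converts the UFD property of $R/\mathbf{x}R$ into (i) and (ii) for the same $L$ and $\mathbf{x}$, yielding (2).

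The main obstacle is the claim, in particular the bookkeeping around Fossum's criterion: one must check that the depth-$\geq 2$ requirement is free (because $R/\mathbf{x}R$ is a $2$-dimensional Gorenstein ring), match (ii) with $\Pic(U_{R/\mathbf{x}R})=0$, and translate (i) into the regularity of $R_{\mathfrak{p}}/\mathbf{x}R_{\mathfrak{p}}$ via the standard fact that a regular sequence extends to a regular system of parameters exactly when the quotient stays regular. The input that makes these translations valid is the regularity of $R_{\mathfrak{p}}$ at every non-maximal prime; because Proposition \ref{prop:3.4} yields this only from the $\SC_d$-condition, it is essential to establish $\SC_d$ (rather than the given $\SC_{d-1}$) through Proposition \ref{prop:3.12} before invoking the claim in the direction (1) $\Rightarrow$ (2).
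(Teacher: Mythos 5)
Your proposal is correct and follows essentially the same route as the paper: Proposition \ref{prop:3.12} upgrades $\SC_{d-1}$ to $\SC_d$, Proposition \ref{prop:3.3} gives the reverse, Proposition \ref{prop:3.4} supplies regularity of $R_{\mathfrak{p}}$ for $\mathfrak{p}\in U_R$, and Proposition \ref{prop:3.15} translates the UFD property of $R/\mathbf{x}R$ into conditions (i) and (ii). The only (harmless) divergence is in deducing regularity of $(R/\mathbf{x}R)_{\mathfrak{P}}$ from the UFD hypothesis: the paper routes this through Theorem \ref{theorem:3.11} and Proposition \ref{prop:3.4}, while you use the more elementary observation that a local UFD of dimension at most one is a field or a DVR.
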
\text{}
\begin{proof} $(1) \Rightarrow (2).$ By Proposition \ref{prop:3.12}, $R$ satisfies the SC$_{d}$-condition. Let $M \in$ CM$(R)$, and let\, $L$ and \,${\bf{x}}$\, be as in $(1)$. Let $\mathfrak{p} \in U_{R}$ be a prime ideal that contains $\bf{x}$. Let $\pi:R\longrightarrow R/{\bf{x}}R$\, be the quotient map, and let $\mathfrak{P}=\pi(\mathfrak{p})$. Then $\mathfrak{P}$ is a prime ideal and $\Ht(\mathfrak{P})<2$. Since $\Ht(\mathfrak{p})<d$ and $R$ satisfies the SC$_{d}$-condition, the localization $R_{\mathfrak{p}}$ is regular by Proposition \ref{prop:3.4}. Since $R/{\bf{x}}R$ is a Gorenstein complete local UFD, it satisfies the SC$_{2}$-condition by Theorem \ref{theorem:3.11}. By Proposition \ref{prop:3.4}, we have $(R/{\bf{x}}R)_{\mathfrak{P}} \cong R_{\mathfrak{p}}/{\bf{x}}R_{\mathfrak{p}}$ is a regular local ring. Thus, $\bf{x}$ is a subset of a regular system of parameters for $R_{\mathfrak{p}}$ \cite[Proposition 2.2.4]{BH93}. Finally, we have Pic$(U_{R/{\bf{x}}R})=0$\, by Proposition \ref{prop:3.15}. \newline\newline\newline
$(2) \Rightarrow (1)$ Since $R$ satisfies the SC$_{d}$-condition, $R$ satisfies the SC$_{d-1}$-condition by Proposition \ref{prop:3.3}. Let $M \in {\text{CM}}(R)$, and let $L$ and ${\bf{x}}$ be as in $(2)$. We prove that $R/{\bf{x}}R$ is a UFD. Let $\mathfrak{P} \in U_{R/{\bf{x}}R}$ and let $\mathfrak{p}=\pi^{-1}(\mathfrak{P})$. Then $\mathfrak{p}$ is a prime ideal containing $\bf{x}$ and $\Ht(\mathfrak{p})<d$. Since $R$ satisfies the SC$_{d}$-condition, $R_{\mathfrak{p}}$ is a regular local ring. Since $\bf{x}$ is a subset of a regular system of parameters for $R_{\mathfrak{p}}$, we have $R_{\mathfrak{p}}/{\bf{x}}R_{\mathfrak{p}} \cong (R/{\bf{x}}R)_{\mathfrak{P}}$ is a regular local ring, and therefore a UFD \cite[Proposition 2.2.4 and Theorem 2.2.19]{BH93}. Therefore, $R/{\bf{x}}R$ is a UFD by Proposition \ref{prop:3.15}.
\end{proof}\text{}
\begin{corollary}
Let $R$ be a Gorenstein complete local ring of dimension $3$. Assume that $R$ is a UFD and for each $M \in \CM(R)$ there exists $L \in \CM^{2}(R)$ such that $X_{L} \overset{st}{\cong} \Omega_{R}^{1}(M)$ and a nonzerodivisor $x \in \Ann_{R}(L)$ such that $R/xR$ is also a UFD. Then $R$ satisfies the $\SC_{3}$-condition.
\end{corollary}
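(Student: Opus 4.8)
The plan is to recognize this corollary as the instance $d=r=3$ of Proposition \ref{prop:3.12} (equivalently, the $d=3$ case of Corollary \ref{corollary:3.16}, where ``$R$ is a UFD'' plays the role of the global $\SC_{d-1}=\SC_2$ hypothesis via Theorem \ref{theorem:3.11}). Thus the proof reduces to checking, for an arbitrary $M \in \CM(R)$, that the three hypotheses of Proposition \ref{prop:3.12} are met with $d=r=3$; once $M$ is shown to satisfy the $\SC_3$-condition, the arbitrariness of $M$ gives that $R$ satisfies the $\SC_3$-condition.

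So fix $M \in \CM(R)$. First I would verify that $\Omega_{R}^{1}(M)$ is MCM, so that the $\SC$-terminology applies to it: taking a free cover $0 \longrightarrow \Omega_{R}^{1}(M) \longrightarrow F \longrightarrow M \longrightarrow 0$ and applying the depth lemma yields $\Depth_{R}(\Omega_{R}^{1}(M)) \geq \min\{\Depth_{R}(F),\, \Depth_{R}(M)+1\} = d$, since $M$ is MCM; hence $\Omega_{R}^{1}(M)$ is MCM. Next, by hypothesis there is a module $L \in \CM^{2}(R)$ with $X_{L} \overset{st}{\cong} \Omega_{R}^{1}(M)$; by Definition \ref{definition:3.1}, the existence of this codimension-two Cohen-Macaulay module with $X_L$ stably isomorphic to $\Omega_{R}^{1}(M)$ is exactly the statement that $\Omega_{R}^{1}(M)$ satisfies the $\SC_{2}$-condition. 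Finally, the nonzerodivisor $x \in \Ann_{R}(L)$ constitutes a regular sequence ${\bf x}=(x)$ of length $1 = r-2$ contained in $\Ann_{R}(L)$, and $R/xR$ is a UFD by hypothesis.

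At this point all hypotheses of Proposition \ref{prop:3.12} with $d=3$ and $r=3$ are in place: $\Omega_{R}^{1}(M)$ satisfies $\SC_{r-1}=\SC_{2}$, there is $L \in \CM^{r-1}(R)=\CM^{2}(R)$ with $X_{L} \overset{st}{\cong} \Omega_{R}^{1}(M)$, and ${\bf x} \in \Ann_{R}(L)$ is a regular sequence of length $r-2=1$ with $R/{\bf x}R$ a UFD. Applying the proposition gives that $M$ satisfies the $\SC_{3}$-condition, and since $M \in \CM(R)$ was arbitrary, $R$ satisfies the $\SC_{3}$-condition. I do not expect a genuine obstacle here, as the argument is a direct specialization of Proposition \ref{prop:3.12}; the only points requiring (routine) verification are that $\Omega_{R}^{1}(M)$ is MCM and that the hypothesis on $L$ translates into the $\SC_{2}$-condition for $\Omega_{R}^{1}(M)$. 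The assumption that $R$ itself is a UFD is used only to align the statement with the $\SC_{d-1}$-framework of Corollary \ref{corollary:3.16} via Theorem \ref{theorem:3.11}, and is not otherwise invoked in the reduction.
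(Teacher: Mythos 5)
Your proposal is correct and is exactly the intended argument: the paper states this corollary without proof, as an immediate specialization of Proposition \ref{prop:3.12} with $d=r=3$, and your verification of its hypotheses (the given $L \in \CM^{2}(R)$ witnesses the $\SC_{2}$-condition for $\Omega_{R}^{1}(M)$, and the nonzerodivisor $x$ is the required length-one regular sequence) is precisely what is needed. Your observation that the hypothesis ``$R$ is a UFD'' is not actually invoked in the reduction, serving only to match condition (1) of Corollary \ref{corollary:3.16} via Theorem \ref{theorem:3.11}, is also accurate.
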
\text{}
\section*{Acknowledgements} Section 3 of this paper is from my PhD dissertation. I am thankful to my PhD advisor Graham Leuschke for his helpful suggestions and guidance. 
\text{}\newline
\begin{bibdiv}
    \begin{biblist}

\bib{Bartels24}{thesis}{
        Address = {Syracuse University}
	Author = {Bartels, Richard F.},
	Title = {Numerical invariants of Cohen-Macaulay local rings},
	Year = {2024}
   eprint = {https://surface.syr.edu/etd/1922/}
}

\bib{BH93}{book}{
	Address = {Cambridge},
	Author = {Bruns, Winfred},
        Author= { Herzog, J\"{u}rgen}
	Date-Added = {2012-08-10 00:17:34 +0000},
	Date-Modified = {2012-08-10 00:17:35 +0000},
	Isbn = {0-521-41068-1},
	Mrclass = {13H10 (13-02)},
	Mrnumber = {1251956},
	Mrreviewer = {Matthew Miller},
	Pages = {xii+403},
	Publisher = {Cambridge University Press},
	Series = {Cambridge Studies in Advanced Mathematics},
	Title = {Cohen-{M}acaulay rings},
	Volume = {39},
	Year = {1993}
 }

\bib{Ding90}{thesis}{
        Address = {Brandeis University}
	Author = {Ding, Songqing},
	Title = {Cohen-{M}acaulay approximations over a Gorenstein local ring},
	Volume = {153},
	Year = {1990}}

\bib{Fossum73}{book}{title={The Divisor Class Group of a Krull Domain},
  author={Robert M. Fossum},
  year={1973},
  url={https://api.semanticscholar.org/CorpusID:117250266}
}

\bib{KK99}{article}{
author = {Kiriko Kato},
title = {Cohen-macaulay approximations from the viewpoint of triangulated categories},
journal = {Communications in Algebra},
volume = {27},
number = {3},
pages = {1103-1126},
year = {1999},
publisher = {Taylor & Francis},
doi = {10.1080/00927879908826484},
URL = {       https://doi.org/10.1080/00927879908826484
},
eprint = {             https://doi.org/10.1080/00927879908826484}
}

\bib{KK07}{article}{
title = {Syzygies of modules with positive codimension},
journal = {Journal of Algebra},
volume = {318},
number = {1},
pages = {25-36},
year = {2007},
issn = {0021-8693},
doi = {https://doi.org/10.1016/j.jalgebra.2006.11.030},
url = {https://www.sciencedirect.com/science/article/pii/S0021869306008040},
author = {Kiriko Kato},
keywords = {Cohen–Macaulay approximation, Cohen–Macaulay module}}

\bib{LW12}{book}{,
	Author = {Leuschke, Graham J.},
        Author = {Wiegand, Roger},
	Isbn = {978-0-8218-7581-0},
	Mrclass = {13C14 (13H10 16G10)},
	Mrnumber = {2919145},
	Mrreviewer = {Geoffrey D. Dietz},
	Note = {xviii+367 pages. ISBN: 978-0-8218-7581-0},
	Pages = {xviii+367},
	Publisher = {American Mathematical Society, Providence, RI},
	Series = {Mathematical Surveys and Monographs},
	Title = {Cohen-{M}acaulay representations},
	Url = {http://www.leuschke.org/research/MCMBook},
	Volume = {181},
	Year = {2012},
	Bdsk-Url-1 = {http://www.leuschke.org/research/MCMBook}}

\bib{M89}{book}{Address = {Cambridge},
	Author = {Matsumura, Hideyuki},
	Date-Added = {2012-08-10 00:17:34 +0000},
	Date-Modified = {2012-08-10 00:17:35 +0000},
	Edition = {Second},
	Isbn = {0-521-36764-6},
	Mrclass = {13-01},
	Mrnumber = {1011461},
	Note = {Translated from the Japanese by M. Reid},
	Pages = {xiv+320},
	Publisher = {Cambridge University Press},
	Series = {Cambridge Studies in Advanced Mathematics},
	Title = {Commutative ring theory},
	Volume = {8},
	Year = {1989}}

\bib{YI00}{article}{
title = {Linkage of Cohen–Macaulay modules over a Gorenstein ring},
journal = {Journal of Pure and Applied Algebra},
volume = {149},
number = {3},
pages = {305-318},
year = {2000},
issn = {0022-4049},
doi = {https://doi.org/10.1016/S0022-4049(98)00167-4},
url = {https://www.sciencedirect.com/science/article/pii/S0022404998001674},
author = {Yuji Yoshino and Satoru Isogawa}}

    \end{biblist}
\end{bibdiv}

\bibliographystyle{amsplain}
}
\end{document}